\documentclass[12pt,oneside]{amsart} 
\usepackage{amssymb,amscd}
\usepackage{euscript}
 
 \usepackage{color} 

\textwidth15.2cm
\hoffset=-1.1cm

      \theoremstyle{plain}
      \newtheorem{theorem}{Theorem}[section]
      \newtheorem{lemma}[theorem]{Lemma}
      \newtheorem{corollary}[theorem]{Corollary}
      \newtheorem{proposition}[theorem]{Proposition}
      \newtheorem{remark}[theorem]{Remark}
      
      \newtheorem{definition}[theorem]{Definition}        
          
\numberwithin{equation}{section}

      \makeatletter
      \def\@setcopyright{}
      \def\serieslogo@{}
      \makeatother

\def\df{\mbox{Diff}}

\def\A{\EuScript{A}} 
\def\B{\EuScript{B}} 
 
\def\E{\mathcal{V}}
\def\V{\mathcal{V}}
\def\n{\mathcal N}
\def\M{\mathcal M}
\def\T{\mathcal{T}}

\def\R{\mathbb R}
\def\Z{\mathbb Z}
\def\N{\mathbb N}
\def\dist{\text{dist}}

\def\Id{\text{Id}}
\def\e{\epsilon}
\def\a{\alpha}

\def\la{\lambda}

\def\tw{\tilde W}
\def\tg{\tilde g}

\def\drm{\mbox{Diff}^{\,r}(\M)}
\def\dqm{\mbox{Diff}^{\,q}(\M)}
\def\dtwom{\mbox{Diff}^{\,2}(\M)}
\def\donem{\mbox{Diff}^{\,1}(\M)}
\def\dkm{\mbox{Diff}^{\,k+1}(\M)}
\def\df{\mbox{Diff}}

\def\QED{\hfill\hfill{\square}}

\begin{document}

\author{Victoria Sadovskaya$^{\ast}$}

\address{Department of Mathematics, The Pennsylvania State University, 
University Park, PA 16802, USA.}
\email{sadovskaya@psu.edu}

\title[Boundedness and invariant metrics for diffeomorphism cocycles]
{Boundedness and invariant metrics for diffeomorphism cocycles over hyperbolic systems} 

\thanks{$^{\ast}$ Supported in part by NSF grant DMS-1764216}
\thanks{{\it Mathematical subject classification:}\, 37D20, 54H15}
\thanks{{\it Keywords:}\, Cocycle, diffeomorphism group, periodic orbit, hyperbolic system, symbolic system}


\begin{abstract} 

Let $\A$ be a H\"older continuous cocycle over a hyperbolic dynamical 
system with values in the group of diffeomorphisms of a compact manifold $\M$.
We consider the periodic data of $\A$, i.e., the set of its return values along the 
periodic orbits in the base. We show that if the periodic data of $\A$ is bounded in 
$\dqm$, $q>1$, then the set of values of the cocycle is bounded in $\drm$ for each $r<q$.
Moreover, such a cocycle is isometric with respect to a H\"older continuous family of
Riemannian metrics on $\M$.
\end{abstract}

\maketitle 

\section{Introduction and statement of the results}

Group-valued cocycles over hyperbolic systems  have been extensively studied 
starting with the work of A. Liv\v{s}ic \cite{Liv1,Liv2}, who obtained definitive results
for commutative groups and made some progress for more general ones.
The subsequent research was focused on  non-abelian groups, starting with compact
groups and proceeding to more general ones, see \cite{KtN} 
for  a  survey of results prior to 2008.  The case of  $GL(d,\R)$ and Lie groups
 is now relatively well understood,
see for example \cite{PW,LW,K11,KS10,S15}. Cocycles with values in diffeomorphism groups give
another important  and dynamically natural class, which is harder to analyze.

In this paper we consider the group $\drm$ of $C^r$ diffeomorphisms of a compact 
connected manifold $\M$ and study cocycles  over a hyperbolic dynamical system
with values in $\drm$. By a hyperbolic system  we mean  either a transitive 
Anosov diffeomorphism of a compact connected manifold, or a topologically mixing  
diffeomorphism of a locally maximal 
hyperbolic set, or a mixing subshift of finite type, see Section \ref{hyperbolic}.
We focus on the problem of obtaining  information
 about the cocycle from its periodic data, that is, its return values along the periodic orbits in the base.
This is one of the central questions for cocycles over hyperbolic systems.
The basic problem in this area is to show that a cocycle with the identity  periodic data
is cohomologous to  the identity cocycle.
 This was done in \cite{NT95,LW} under
additional assumptions on growth of the cocycle and recently in \cite{KP,AKL,Gu}
just from the periodic data.  


In this paper we obtain results for much broader class of cocycles, which
have uniformly bounded periodic data. We show that all iterates of such 
a cocycle remain uniformly bounded and the cocycle is, in a sense, a cocycle 
of isometries.


\begin{definition} Let $f$ be a homeomorphism of a compact metric space $X$
and let $A$ be a function from $X$ to $\drm$. 
The {\em $\drm$-valued cocycle over $f$ generated by }$A$ 
is the map $\A:\,X \times \Z \,\to \drm$ defined  by $\,\A(x,0)=\Id\,$ and for $n\in \N$,
 $$
\A(x,n)=\A_x^n = A(f^{n-1} x)\circ \cdots \circ A(x) \quad\text{and}\quad\,
\A(x,-n)=\A_x^{-n}= (\A_{f^{-n} x}^n)^{-1} .
$$
\end{definition}

\noindent Clearly, $\A$ satisfies the {\em cocycle equation}\,
$\A^{n+k}_x= \A^n_{f^k x} \circ \A^k_x$.
\vskip.1cm

Cocycles can be considered in any regularity. We say that a cocycle $\A$  is 
bounded, continuous, or H\"older continuous if this property holds for its generator $A(x)=\A_x$.
We call a set $S$  in $\drm$ {\em bounded}\, if $\| g \|_{C^r}$ and $\| g^{-1} \|_{C^r}$
 are bounded uniformly in $g\in S$. Here  $\| . \|_{C^r}$ denotes the usual $C^r$ norm
 adapted to the manifold setting, see Section \ref{Crdist}, where a distance
 $d_{C^r}$ between diffeomorphisms is also defined.
A $\drm$-valued cocycle $\A$ is {\em $\beta$-H\"older}, $0<\beta \le 1$, if 
 there exists $c>0$ such that
$$
 d_{C^r} (\A_x, \A_y) \le  \, c\, d_X (x,y)^\beta \quad\text{for all  }x,y \in X. 
$$
H\"older continuity is the most natural setting for cocycles over hyperbolic systems,
especially since we include non-smooth systems in the base.

\vskip.1cm
For a cocycle $\A$,  we consider the periodic data set $\A_P$  and the set of all values $\A_X$,
$$
\A_P=\{ \A_p^k:\; p=f^kp,\; p\in X,\; k\in \N \} \quad\text{and}\quad
\A_X=\{ \A_x^n:\;  x\in X,\; n\in \Z \}.
$$

Our main results are the following two theorems. The first one gives boundedness
 of the cocycle and the second one yields an invariant metric.

\begin{theorem} \label{main 1} 
Let $(X,f)$ be a hyperbolic system, let $\M$ be a compact 
connected manifold, let $k\in \N$, and 
let $\A$ be a bounded $\dkm$-valued cocycle over $f$ which is H\"older continuous 
as a $\dqm$-valued cocycle with $q=k+\gamma$, $0<\gamma<1$.
\vskip.1cm
{\bf (i)} If the periodic data set $\A_P$ is  bounded in $\dqm$, then 
the value set $ \A_X$ is 

\hskip.65cm bounded in $\drm$ for any $r<q$.
\vskip.1cm
{\bf (ii)} If $\A_P$ is  bounded in $\donem$, then 
$ \A_X$ is also bounded in $\donem$.
\end{theorem}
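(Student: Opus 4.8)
The plan. Boundedness in $C^0$ is automatic since $\M$ is compact, so the whole issue concerns the derivatives of the iterates $\A^n_x$. The basic tool throughout is the Anosov closing lemma with exponential shadowing at both ends of an orbit segment: for every $x\in X$ and $n\in\N$ there is a periodic point $p$, of period $m$ with $m-n$ bounded, whose orbit shadows $x,fx,\dots,f^n x$ with $\dist(f^jx,f^jp)\le C\la^{\min\{j,\,n-j\}}$. Since $\A^m_p\in\A_P$ and $\A^n_p$ differs from $\A^m_p$ by a composition of boundedly many generators, all the ``periodic'' diffeomorphisms that actually enter the comparison lie in a fixed bounded subset of $\dqm$ (resp.\ $\donem$ in part~(ii)).

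First I would show that the iterates grow subexponentially. By the chain rule the sequences $n\mapsto\log\|D\A^n_x\|_{C^0}$ and $n\mapsto\log\|D(\A^n_x)^{-1}\|_{C^0}$ are subadditive cocycles over $f$; by the periodic approximation property for subadditive cocycles over hyperbolic systems their maximal growth rates equal the suprema over periodic orbits of the periodic averages, and these vanish because $\A^{m\ell}_p\in\A_P$ is bounded. Hence $\|D^{\pm1}\A^n_x\|_{C^0}=e^{o(n)}$. Passing to higher order, one uses that the intrinsic higher-order data of $\A^n_x$ (its nonlinearity with respect to a fixed background connection) composes \emph{additively}, weighted only by the first-order distortions; since the latter are now subexponential and $A\in\dkm$ is fixed, the $C^j$-norms of $\A^n_x$ for $j\le k+1$ are all $e^{o(n)}$.

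With subexponential growth available, the exponentially fast shadowing along stable and unstable manifolds lets me construct the holonomies of $\A$: for $y\in W^s_{\mathrm{loc}}(x)$ the limit $H^s_{x,y}=\lim_n(\A^n_y)^{-1}\circ\A^n_x$ exists and lies in $\drm$ for every $r<q$, is H\"older in $(x,y)$, equals $\Id$ on the diagonal, and is $\A$-equivariant; similarly for the unstable holonomy $H^u$ along the past. Convergence of the telescoping series is because the increments are conjugates of diffeomorphisms that are $C^q$-close to the identity at rate $\la^{\b n}$, the conjugation being by iterates that distort $C^r$-norms only subexponentially. Comparing an arbitrary $\A^n_x$ with the shadowing periodic value $\A^m_p$, through the local product structure, then writes $\A^n_x$ (up to an error that is negligible as $n\to\infty$) as $H_1\circ\A^m_p\circ H_2$ with $H_1,H_2$ ranging over a fixed bounded set of holonomies; since composition is a bounded operation on $\drm$, this gives part~(i), the restriction $r<q$ entering because the holonomy limits are only shown to converge in $\drm$ for $r<q$. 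For part~(ii) the same scheme runs with $q$ replaced by $1$: the subadditive step needs only $\donem$-bounded periodic data, the $C^{1+}$-subexponential bound (hence the $C^1$ holonomies) is free from $A\in\dkm$, and there is no loss because the composition estimate in $\donem$ is just the chain rule.

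The step I expect to be the main obstacle is controlling the higher-order behaviour of the compositions $\A^n_x$. Unlike for linear cocycles, where the operator norm is submultiplicative, the $C^r$-norm with $r\ge2$ is not submultiplicative under composition (Fa\`a di Bruno), so one cannot estimate $\|\A^n_x\|_{C^r}$ directly. The resolution --- first bringing the first derivatives under control by the ``linear'' subadditive argument, and only then extracting higher regularity from the holonomies (equivalently, from the induced $\A$-invariant family of Riemannian metrics, with respect to which every $\A^n_x$ is an isometry, so that the elliptic regularity theory for isometries applies) --- is where the genuine work lies, and it is also what forces the loss from $q$ to arbitrary $r<q$.
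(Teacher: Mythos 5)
Your outline matches the paper's at the highest level -- subexponential growth of the derivatives from the bounded periodic data, holonomies $H^{s/u}_{x,y}=\lim(\A^n_y)^{-1}\circ\A^n_x$, a closing/shadowing argument, and composition estimates -- and the final ``$\A^n_x$ is conjugate to a periodic value via bounded holonomies'' step is essentially what the paper does (via (H2), (H3) and Lemma \ref{distance}, pushed along a dense orbit). The differences (specification with two-sided exponential shadowing instead of Anosov closing applied to a dense recurrent orbit; the final bookkeeping) are cosmetic.

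The genuine gap is in the first and hardest step. You dispose of subexponential growth by declaring that $n\mapsto\log\max_t\|D_t\A^n_x\|$ is a subadditive cocycle over $(X,f)$ and appealing to ``the periodic approximation property for subadditive cocycles over hyperbolic systems.'' There is no such off-the-shelf theorem at the level of generality you invoke. Kalinin's periodic-approximation result is for genuine H\"older \emph{linear} cocycles over a hyperbolic base, and does not apply to an arbitrary H\"older subadditive sequence obtained by fiberwise maximization. The derivative data here really is a linear cocycle, namely $\B_{(x,t)}=D_t\A_x$, but it lives over the skew product $F(x,t)=(fx,\A_x t)$ on $X\times\M$, which is \emph{not} hyperbolic; controlling its Lyapunov exponents from periodic data is exactly the content of the recent Avila--Kocsard--Liu theorem (Theorem \ref{AKL} here), and the paper's Proposition \ref{0expProp} is a (small) extension of AKL to bounded rather than identity periodic data. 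That is the load-bearing step, and you have compressed it into an unproved slogan. Either you must prove a periodic-approximation theorem for your subadditive cocycle over $X$ (which, as far as I know, is open in that generality), or you must route through $\B$ over $X\times\M$ and contend with the non-hyperbolicity of the base as AKL do.

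Two smaller points. First, your ``shadowing with exponential decay from both ends, period $m$ with $m-n$ bounded, for \emph{every} $x,n$'' is specification, not the Anosov closing lemma (which requires $d_X(x,f^nx)$ small); it is fine for this class of systems, just mislabeled, and it does let you avoid the dense-orbit detour. Second, the closing paragraph attributes the loss from $q$ to $r<q$ in part to ``elliptic regularity theory for isometries'' applied to an invariant family of Riemannian metrics. That is the mechanism in Theorem \ref{main 2}, not in Theorem \ref{main 1}: here the loss enters entirely through the de la Llave--Obaya-type composition estimate (Lemma \ref{distance}), which controls $d_{C^r}(g\circ h_1\circ\tg,\,g\circ h_2\circ\tg)$ only by a H\"older power $d_{C^r}(h_1,h_2)^{q-r}$ of the $C^r$-distance, and this is what makes the telescoping series for the holonomy converge only after giving up $\rho=q-r$. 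No invariant metric is constructed or needed in the proof of Theorem \ref{main 1}.
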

Boundedness of cocycles  allows, in particular, to obtain higher regularity
of a continuous transfer map between them by the results in \cite{NT96,NT98}.

\begin{theorem} \label{main 2} 
Let $(X,f)$ and  $\M$ be as above, and let
 $\A$ be a  bounded $\dtwom$-valued cocycle over $f$ that is $\beta$-H\"older continuous 
as a $\dqm$-valued cocycle with $q=1+\gamma$, $0<\gamma<1$.
If the periodic data set $\A_P$ is  bounded in $\dqm$, then
there exists a family of Riemannian metrics $\{\tau_x: \, x\in X\}$ on $\M$
such that 
$$
\A_x : (\M,\tau_x)\to (\M,\tau_{fx}) \quad\text{is an isometry for each }x\in X.
$$
 Moreover,  for any $\a<\gamma$  each $\tau_x$ is 
$\a$-H\"older continuous on $\M$
and depends H\"older continuously on $x$ in $C^\a$ distance with exponent $\beta(\gamma-\a)$.
\end{theorem}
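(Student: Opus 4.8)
The plan is to build the family $\{\tau_x\}$ by averaging pullbacks of a fixed metric. Fix a smooth background metric $\rho$ on $\M$. At each point $m\in\M$ the inner products on $T_m\M$ form a convex cone in the vector space of symmetric bilinear forms, pullback by a linear isomorphism is a linear, cone-preserving map, and the hypothesis together with Theorem \ref{main 1}(i) is exactly what is needed to run an averaging: since $\A_P$ is bounded in $\dqm$ with $q=1+\gamma$, the value set $\A_X$ is bounded in $\drm$ for every $r<q$, so $\|\A_x^n\|_{C^r}$ and $\|(\A_x^n)^{-1}\|_{C^r}$ are bounded uniformly over $x\in X$ and $n\in\Z$. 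Consequently the pulled-back metrics $(\A_x^n)^{\ast}\rho$ satisfy a uniform two-sided comparison $c^{-1}\rho\le(\A_x^n)^{\ast}\rho\le c\,\rho$, and for every $\a<\gamma$ (choosing $r$ with $1+\a<r<q$) the map $m\mapsto(\A_x^n)^{\ast}\rho(m)$ is $\a$-Hölder on $\M$ with a Hölder constant independent of $x$ and $n$.

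Next I would fix a shift-invariant Banach limit $\omega$ on $\ell^\infty(\Z)$ and, in any local trivialization of $T\M$, set
\[
\tau_x(m)\;:=\;\omega\bigl(\,n\mapsto(\A_x^n)^{\ast}\rho(m)\,\bigr),
\]
$\omega$ being applied entrywise to the matrices of the inner products. Linearity, positivity and normalization of $\omega$ together with the uniform comparison above make each $\tau_x(m)$ a symmetric positive-definite form, again comparable to $\rho(m)$, so $\{\tau_x\}$ is a genuine family of Riemannian metrics. The isometry property is where the cocycle equation enters: for $u,v\in T_m\M$ the form $\bigl((\A_x)^{\ast}\tau_{fx}\bigr)(m)$ evaluated at $(u,v)$ equals $\omega\bigl(n\mapsto(\A_{fx}^n\circ\A_x)^{\ast}\rho(m)(u,v)\bigr)$, and since $\A_{fx}^n\circ\A_x=\A_x^{n+1}$ this is the one-step shift of the sequence defining $\tau_x(m)(u,v)$; shift-invariance of $\omega$ then gives $(\A_x)^{\ast}\tau_{fx}=\tau_x$, i.e. $\A_x\colon(\M,\tau_x)\to(\M,\tau_{fx})$ is an isometry. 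Finally, $|\omega(a)|\le\|a\|_\infty$ turns the uniform-in-$n$ $\a$-Hölder bound on $m\mapsto(\A_x^n)^{\ast}\rho(m)$ into the assertion that each $\tau_x$ is $\a$-Hölder on $\M$, for every $\a<\gamma$, with constant independent of $x$. (One may equally well take $\tau_x(m)$ to be the circumcenter, in the nonpositively curved symmetric-space fiber over $m$, of the bounded set $\{(\A_x^n)^{\ast}\rho(m):n\in\Z\}$; invariance then holds because the fiber isometry induced by $\A_x$ carries this set onto itself, and circumcenters are preserved by isometries.)

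The step I expect to be the real obstacle is the Hölder dependence of $x\mapsto\tau_x$ on the base point: neither form of the construction makes this manifest, because for $x\ne y$ the diffeomorphisms $\A_x^n$ and $\A_y^n$ drift apart as $|n|\to\infty$, so the $\Z$-indexed data being averaged are far apart in $\ell^\infty$, and one must bring in hyperbolicity and the periodic-data hypothesis. I would first handle periodic points, where the construction collapses to averaging the iterates of the single bounded diffeomorphism $g_p:=\A_p^k\in\dqm$, yielding a canonical $g_p$-invariant metric $\tau_p$ and, by transport along the orbit, an $\A$-invariant assignment on the orbit. The key estimate to establish is: for periodic $p,p'$ with $d_X(p,p')=\delta$ small, $d_{C^\a}(\tau_p,\tau_{p'})\le C\,\delta^{\beta(\gamma-\a)}$. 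I expect this to come from the closing property of the hyperbolic system: $p$ and $p'$ shadow each other for time of order $|\log\delta|$, along which the $\beta$-Hölder continuity of $\A$ and the uniform $C^r$ bounds of Theorem \ref{main 1} control the $C^r$-discrepancy between the corresponding $\A$-products; feeding this into the averaging and interpolating between a $C^0$ estimate and a $C^{\gamma'}$ bound with $\gamma'<\gamma$ produces the exponent $\beta(\gamma-\a)$, the factor $\gamma-\a$ reflecting the regularity sacrificed below the threshold $\gamma$. A uniform such bound on the dense set of periodic points then extends $x\mapsto\tau_x$ $\beta(\gamma-\a)$-Hölder continuously to all of $X$, and passing to the limit preserves both the isometry property and the $\a$-Hölder regularity on $\M$. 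An alternative for this last and hardest step is to construct the stable and unstable holonomies of $\A$ in $\drm$ — which exist and are Hölder under the boundedness supplied by Theorem \ref{main 1} — require $\tau$ to be holonomy-invariant, and propagate it along stable and unstable leaves using the local product structure, the Hölder dependence on $x$ then being inherited from that of the holonomies. The delicate bookkeeping in this paragraph is where I expect the bulk of the work to lie.
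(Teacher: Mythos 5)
The first half of your construction is essentially correct and is a legitimate variant of the paper's: instead of your shift-invariant mean $\omega$ applied to $n\mapsto(\A_x^n)^{\ast}\rho$, the paper takes the circumcenter of the bounded set $\{(D_t\A_x^n)^{\ast}\tau:\,n\in\Z\}$ in the nonpositively curved space of inner products, which has the advantage of being a pointwise limit of continuous sections and hence Borel in $(x,t)$ (a Banach limit need not even be measurable in $x$). Either way, uniform comparability, the isometry property $\A_x^{\ast}\tau_{fx}=\tau_x$, and uniform $\alpha$-H\"older continuity along $\M$ follow from Theorem \ref{main 1} exactly as you say. But this is the easy part of the theorem; the assertion that $x\mapsto\tau_x$ is $\beta(\gamma-\alpha)$-H\"older is part of the statement, and it is here that your proposal has a genuine gap.

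Your main strategy for that step fails. For nearby periodic points $p,p'$ with $d_X(p,p')=\delta$, the orbits shadow each other only for times $|n|\lesssim|\log\delta|$, whereas a shift-invariant mean is unchanged by altering any finite window of the sequence, and the circumcenter depends on the entire orbit; closeness of the pulled-back metrics over the shadowing window therefore gives no control on $\tau_p$ versus $\tau_{p'}$, and there is no a priori ``$C^0$ estimate'' to interpolate against. Moreover, invariant metrics for a single diffeomorphism are highly non-unique (if $\A_p^k=\Id$ every metric is invariant), so independently averaged metrics at different periodic points need not be coherent at all: a global mechanism is required. Your alternative route is in outline the paper's, but its crux is precisely what you leave as a requirement rather than a proof: one must show that the measurable invariant metric is (essentially) invariant under the derivative stable/unstable holonomies. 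The paper does this via a Lusin--Birkhoff recurrence argument (Proposition \ref{tau H invariant}) for an $F$-invariant measure $\hat\mu=\int m_x\,d\mu(x)$ fibered over the measure of maximal entropy, combines it with the fiberwise H\"older continuity of $\hat\tau$ (needed because the skew product is not accessible), a Hopf/ergodic-decomposition step, and the product structure and full support of the conditionals of $\mu$, and only then extends from a dense full-measure set by continuity, redefining the metric on a null set. The exponent $\beta(\gamma-\alpha)$ comes from the holonomy estimate $d_{C^r}(H^{s/u}_{x,y},\Id)\le c\,d_X(x,y)^{\beta(q-r)}$ with $r=1+\alpha$, not from an interpolation between $C^0$ and $C^{\gamma'}$ bounds. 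As written, your proposal establishes existence of an invariant family that is H\"older along $\M$, but not its (H\"older, or even measurable) dependence on the base point.
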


We note that uniform boundedness of the periodic data is crucial in our theorems.
Indeed, S. Hurtado constructed in \cite{H} an example of 
$\df^\infty(S^3\times S^1)$-valued cocycles over a Bernoulli shift 
for which every periodic value $\A_p^k$ is an isometry of some $C^\infty$ Riemannian metric 
on $\M$, but the cocycle has positive exponential growth rate of derivatives.
Also, our $C^{1+\text{H\"older}}$ regularity assumption for the cocycle is essential as 
it is known that an isometry of any $\a$-H\"older Riemannian metric is  $C^{1+\a}$ \cite{T}.

\section{preliminaries}

\subsection{Hyperbolic dynamical systems}\label{hyperbolic} See \cite{KH} for details.$\;$
\vskip.1cm
\noindent{\bf Transitive Anosov diffeomorphisms.} 
A diffeomorphism  $f$ of  compact connected  manifold $X$
 is called {\it Anosov}\, if there exist a splitting 
of the tangent bundle $TX$ into a direct sum of two $Df$-invariant 
continuous subbundles $E^s$ and $E^u$,  a Riemannian 
metric on $X$, and a number $\la$ such that 
\begin{equation}\label{Anosov def}
\|D_x f (v^s)\| < \la < 1 < \la^{-1} <\|D_x f (v^u)\|
\end{equation}
for any $x \in X$ and unit vectors  
$\,v^s\in E^s(x)$ and $\,v^u\in E^u(x)$.
The stable and unstable subbundles $E^s$ and $E^u$
are tangent to the stable and unstable foliations 
$W^s$ and $W^u$. 
The local stable manifold of $x$, $W^{s}_{loc}(x)$, is 
a small ball in $W^s(x)$ centered at $x$  so that 
$$
d_X (f^nx, f^ny)\le \la^n d_X(x,y)  \quad\text{for all }y\in W^s_{loc}(x)\text{ and }n\in \N.
$$
Local unstable manifolds 
are defined similarly. We assume that they are small enough so that  
$\,W^{s}_{loc}(x) \cap W^{u}_{loc}(z)\,$ consists of a single point
for any sufficiently close $x,z\in X$. 

A diffeomorphism is said to be {\it (topologically) transitive} if there is a point $x$ in $X$
with dense orbit. All known examples of Anosov diffeomorphisms have this property.
 
\vskip.2cm


\noindent{\bf Mixing diffeomorphisms of locally maximal hyperbolic sets.}
More generally, let $f$ be a diffeomorphism of a manifold $\n$.
A compact $f$-invariant  set $X \subset \n$ is
called {\em hyperbolic} if there  exist a continuous $Df$-invariant splitting 
$T_X \n = E^s\oplus E^u$, and a Riemannian metric on an open set
$U \supset X$ such that \eqref{Anosov def} holds for all $x \in X$.
Local stable and unstable manifolds are defined similarly for any $x \in X$
 and we denote  their intersections with $X$ by $W^{s}_{loc}(x)$ and  $W^{y}_{loc}(y)$.
The set $X$ is called {\em locally maximal} if 
$X= \bigcap_{n\in \Z} f^{-n }(U)$ for some open set $U\supset X$. 
This property ensures that $W^{s}_{loc}(x) \cap W^{y}_{loc}(y)$ exists in $X$.
The map $f|_X$ is called {\em topologically mixing}\,
 if for any two open non-empty subsets $U,V$ of $X$
 there is $N\in \N$ such that $\, f^n(U)\cap V\ne \emptyset\,$ for all $n\ge N$.
Topological mixing implies transitivity.
 

\vskip.2cm
\noindent{\bf Mixing subshifts of finite type.}
Let $M$ be $k \times k$ matrix with entries from $\{ 0,1 \} $ such that all 
entries of $M^N$ are positive for some $N$. Let
$$
X= \{ \,x=(x_n) _{n\in \Z}\, : \,\; 1\le x_n\le k \;\text{ and }\;
 M_{x_n,x_{n+1}}=1 \,\text{ for every } n\in \Z \,\}.
$$
\noindent The shift map $f:X\to X\,$ is defined by 
$(f(x))_n=x_{n+1}$.
The system $(X,f)$ is called a {\em  mixing  subshift of finite type}. 
We fix $\la \in (0,1)$ and consider the metric 
$$
d(x,y) = d_\la (x,y)=\la^{n(x,y)},
\;\text{ where }\;n(x,y)=\min\,\{ \,|i|\,: \; x_i \ne y_i  \}.
$$
The following sets play the role of the local stable and unstable 
manifolds of $x$
$$
W^s_{loc}(x)=\{\,y: \;\, x_i=y_i, \;\;i\ge 0\,\}, \quad 
W^u_{loc}(x)=\{\,y:\;\, x_i=y_i, \;\;i\le 0\,\}.
$$
Indeed, for all $x\in X$, $y\in W^s_{loc}(x)$, $y'\in W^u_{loc}(x)$, and $n\in \N$,
$$
d (f^n x, f^n y )= \la^n \, d  (x,y) \quad\text{and}\quad
d  (f^{-n}x, f^{-n}y )= \la^n\, d (x,y),
$$
and for any $x, z\in X$ with $d(x,z) < 1$ the intersection of $W^s_{loc}(x)$ and $W^u_{loc}(z)$
consists of a single point, $y=(y_n)$ such that $y_n=x_n$ for $n\ge 0$
and $y_n=z_n$ for $n\le 0$.


\subsection{Distances on the space of diffeomorphisms.} \label{Crdist}
\vskip.2cm


We fix a smooth background Riemannian metric and the corresponding distance 
$d_\M$ on $\M$. Recall that $\drm$, $r\ge 1$, is the set of $C^r$ diffeomorphisms of $\M$. 
The $C^r$ topology on $\drm$ can
be defined using coordinate patches and the $C^r$ norm for the Euclidean space. 

 For diffeomorphisms $g$ and $h$ of $\M$ we set
$$
d_{C^0}(g,h)=\max_{t\in \M} d_\M(g(t),h(t))+ \max_{t\in \M} d_\M(g^{-1}(t),h^{-1}(t)).
$$
For any $g\in \drm$,  $r\in \N$, we can define its $C^r$ size  as
$$
  |g|_{C^r}= \|g\|_{C^r}+ \|g^{-1}\|_{C^r} , \quad \text{where }\; 
  \|g\|_{C^r}= \max_{t\in \M} d_\M(g(t),t)+ \max_{1 \le i \le r}\max_{t\in \M} \|D_t^ig\|, 
$$
where $D_t^ig$ is the derivative of $g$ of order $i$ at $t$ and its norm is defined as
the norm of the corresponding multilinear form from $T_t\M$ to $T_{g(t)}\M$ 
with respect the Riemannian metric. For $r=k+\a$, $k\in \N$,  $0<\a<1$, the definition is similar with
$$
 \|g\|_{C^r}=\|g\|_{C^k} + 
 \sup \,\{\, \|D^k_t -D^k_{t'}\|\cdot d_\M(t,t')^{-\alpha} : \;t, t'\in \M, \; 0< d_\M(t,t') <\e_0 \},
$$
where $\e_0$ is chosen small compared to the injectivity radius of $\M$ so that
the tangent bundle is locally trivialized via parallel transport and the difference makes sense.
\vskip.05cm 
We call a set $S$ {\it bounded} in $\drm$ if $\,\{ |g|_{C^r}: g\in S\}$ is bounded.
\vskip.05cm 
A natural distance $d_{C^r}(g,h)$ 
on $\drm$,  $r\in \N$,
was defined in \cite{LW} as the infimum of the lengths of piecewise $C^1$ paths  
in $\drm$ connecting $h$ with $g$  and $h^{-1}$ with $g^{-1}$,
where the length of a path $p_s$ is 
$$
\ell_{C^r} (p_s) =\max_{0 \le i \le r}\max_{t\in \M}\int \|\tfrac d{ds} (D_t^i \, p_s) \|\,ds, 
$$
see \cite[Section 5]{LW} for details. For $r=k+\a$,  one also adds 
the corresponding H\"older term:
$$
\ell_{C^r} (p_s) =\ell_{C^k} (p_s)
+ \max_{t\in \M}\int |\tfrac d{ds}  \| (D^k p_s) \|_{\a,t}|\,ds, \quad\text{where}
$$
$$
 \|D^k g \|_{\a,t}= \sup \,\{\, \|D^k_{t'} -D^k_{t}\|\cdot d_\M(t',t)^{-\alpha} : 
 \,\;t'\in \M, \;\, 0< d_\M(t,t') <\e_0 \}.
$$
We will consider distances only between sufficiently $C^r$-close
diffeomorphisms, $r>1$. Then the distance $d_{C^r}(g,h)$ is Lipschitz equivalent 
to $ \|g-h\|_{C^r}+ \|g^{-1}-h^{-1}\|_{C^r}$, where  the difference is understood 
using local trivialization. More specifically,
there exist  constants $\kappa$ and $\delta_0>0$ depending only on $r$ and 
the Riemannian metric so that 
\begin{equation}\label{compare 1}
\kappa^{-1} d_{C^r}(g,h) \le  \|g-h\|_{C^r}+ \|g^{-1}-h^{-1}\|_{C^r} \le \kappa \, d_{C^r}(g,h),
\end{equation}
provided that either $d_{C^r}(g,h)< \delta_0 |g|_{C^r}^{-1}\,$ or 
$\,\|g-h\|_{C^r}+ \|g^{-1}-h^{-1}\|_{C^r}<\delta_0 |g|_{C^r}^{-1}$.
The first inequality in \eqref{compare 1} can be obtained using an interpolating path 
$$
p_s(t)=\exp_{g(t)}(s\cdot \exp ^{-1}_{g(t)}h(t) )
$$ 
between $C^r$-close diffeomorphisms. It is analogous to the ``straight line homotopy" 
$g+s(h-g)$ with $\frac{d}{ds}p_s=h-g.$ The $C^r$ closeness of $g$ and $h$ ensures 
that each $p_s$ is a diffeomorphism as its differential is invertible.
The second inequality follows from the mean value theorem.  
When the diffeomorphisms are not necessarily close, we have a one-sided estimate
\begin{equation}\label{compare 2}
 |g|_{C^r} \le  \exp(\kappa' \, d_{C^0}(g,h))\cdot (|h|_{C^r} + d_{C^r}(g,h)).
\end{equation}
where the constant $\kappa'$ depends only on the Riemannian metric. 
It is given by \cite[Lemma 5.1]{LW} for $r\in \N$ and is obtained similarly 
for $r=k+\a$.


\section{Proof of Theorem \ref{main 1}}

\subsection{Slow growth of the cocycle} \label{slow}
First we show that the cocycle has slow growth using an extension of the 
following recent result by Avila, Kocsard, and Liu.
\begin{theorem} \label{AKL} \cite[Theorem 2.5]{AKL} 
Let $f: X \to X$ be a hyperbolic homeomorphism and let $\A: X \to \df \,^{1+\gamma}(\M)$, $\gamma >0$,
be a H\"older continuous cocycle. 

If  $\A_P=\Id$, that is the cocycle has the identity periodic data, then
\begin{equation}\label{0exp}
\lim_{n\to \pm \infty } n^{-1} \log \|  D_t\A_x^n  \| =0 \quad \text{for all }\,(x,t)\text{ in } X\times \M.
\end{equation}
\end{theorem}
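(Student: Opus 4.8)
The plan is to pass to the derivative cocycle and show that all of its Lyapunov exponents vanish. Consider the skew product $F:X\times\M\to X\times\M$, $F(x,t)=(fx,\A_x(t))$, and the linear cocycle $\Phi$ over $F$ given by $\Phi_n(x,t)=D_t\A_x^n\in L(T_t\M,T_{\A_x^n(t)}\M)$, which is H\"older continuous because $\A$ is. By the chain rule $a_n(x,t):=\log\|\Phi_n(x,t)\|$ is subadditive over $F$, so $\tfrac1n\sup_{X\times\M}a_n$ is subadditive (using surjectivity of $F$) and converges to $\Lambda^+:=\inf_n\tfrac1n\sup a_n$; in particular $\limsup_{n\to\infty}\tfrac1n\log\|D_t\A_x^n\|\le\Lambda^+$ for \emph{every} $(x,t)$. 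Applying the same construction to the adjoint cocycle $\tilde\A_x:=\A_{f^{-1}x}^{-1}$, which satisfies $\tilde\A_x^m=\A_x^{-m}$ and whose periodic values $\A_p^{-k}=(\A_p^k)^{-1}$ are again the identity, controls the backward iterates; and differentiating the identity $\A_{f^nx}^{-n}=(\A_x^n)^{-1}$, together with $\|M\|\,\|M^{-1}\|\ge1$, supplies the matching lower bounds. Hence everything reduces to a single claim: \emph{if $\A$ is a H\"older $\df^{\,1+\gamma}(\M)$-valued cocycle over a hyperbolic system with $\A_P=\Id$, then $\Lambda^+\le0$}, i.e.\ $\|D_t\A_x^n\|\le C_\e\,e^{\e n}$ for all $(x,t)$, all $n\in\N$, and every $\e>0$.

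To prove this claim, first pass to a mixing subshift of finite type by a Markov partition, so that an orbit segment of $f$ which returns near its start can be closed up \emph{exactly} by a periodic orbit with exponential shadowing. The structure to exploit is that $\Phi$ has trivial periodic data over $F$: if $p=f^kp$, then $\A_p^k=\Id$ forces the whole fibre $\{p\}\times\M$ to consist of $F^k$-fixed points, and $\Phi_k(p,t)=D_t\A_p^k=\Id$ for every $t$. Thus any principle that bounds the top characteristic exponent $\Lambda^+$ of the subadditive cocycle $a_n$ in terms of its growth along $F$-periodic orbits would immediately yield $\Lambda^+\le0$, since $a_n\equiv0$ there.

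The difficulty — and I expect it to be the main obstacle — is that $F$ is \emph{not} hyperbolic (only its base is), so Kalinin-type periodic approximation for the top Lyapunov exponent of a H\"older linear cocycle does not apply directly. Concretely, if $(x,t)$ realizes growth close to $\Lambda^+$ over a return time $n$ and one closes $x,\dots,f^{n-1}x$ by a periodic $p=f^np$, the $F$-orbits of $(x,t)$ and $(p,t)$ stay exponentially close in the base, but their fibre coordinates $\A_x^j(t)$ and $\A_p^j(t)$ drift apart at rate $\|A\|_{C^1}^{\,j}$, so comparing $\log\|\Phi_n(x,t)\|$ with $\log\|\Phi_n(p,t)\|=0$ only gives an error of order $\|A\|_{C^1}^{\,n}\cdot(\text{shadowing distance})^{\gamma}$, which is not $o(n)$ in the exponent. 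Overcoming this is the heart of the argument: the plan is to run a scale-by-scale, Liv\v{s}ic-type telescoping in which short closing loops are spliced in at a sparse, carefully chosen (Pliss-type) sequence of "balanced" times along $[0,n]$ — times at which the partial growth $\log\|\Phi_i\|$ is already within $\e$ of maximal while the fibre drift accumulated since the previous chosen time is still small — so that the single factor $\|A\|_{C^1}^{\,n}$ is replaced by a product of controlled factors of total logarithmic size $o(n)$, each comparison being governed by $F$-periodic and hence trivial data. The H\"older (not merely $C^1$) regularity of $\A$ enters in estimating how a small perturbation of the base point $t$ perturbs $\log\|D_t\A_x\|$, which is what makes the telescoping converge; everything else is the standard machinery of subadditive cocycles and symbolic hyperbolic dynamics.
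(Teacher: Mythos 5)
First, note that Theorem \ref{AKL} is not proved in this paper at all: it is a result of Avila--Kocsard--Liu, cited as \cite[Theorem 2.5]{AKL} and used as a black box (only a fragment of their proof of their Theorem 1.1 is invoked later, in Proposition \ref{0expProp}). So there is no in-paper proof to compare against; what follows is an assessment of your argument on its own terms.

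Your framework is sound and matches the natural route. Passing to the derivative cocycle $\Phi_n(x,t)=D_t\A_x^n$ over $F(x,t)=(fx,\A_x(t))$, observing that $a_n=\log\|\Phi_n\|$ is subadditive, that $\sup_{X\times\M}a_n$ is a subadditive sequence because $F$ is a homeomorphism of $X\times\M$, and hence that $\limsup_{n\to\infty}n^{-1}\log\|D_t\A_x^n\|\le\Lambda^+:=\inf_n n^{-1}\sup a_n$ uniformly over $(x,t)$, is all correct. So is the reduction of the remaining inequalities to the same statement for the inverse cocycle over $f^{-1}$, whose periodic data is again $\Id$, together with $\|M\|\,\|M^{-1}\|\ge 1$. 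You have therefore correctly reduced the theorem to the single claim $\Lambda^+\le 0$, and you have correctly identified the central obstruction to proving it by periodic approximation: $F$ is not a uniformly hyperbolic system, so Kalinin's periodic approximation of Lyapunov exponents for H\"older linear cocycles does not apply to $\Phi$ off the shelf, and a naive shadowing argument fails because the fibre coordinates $\A_x^j(t)$ and $\A_p^j(t)$ separate at a rate governed by $\|A\|_{C^1}$ which can overwhelm the exponential closeness in the base.

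The genuine gap is that you have not actually overcome that obstruction. The ``scale-by-scale, Pliss-type telescoping'' is described only as a plan, and as sketched it runs into a concrete tension: to splice in a short closing loop over a sub-segment of length $\ell$ you need the base endpoints of that sub-segment to be $\delta$-close for some small $\delta$, and then the closing error contributed by that sub-segment to the fibre drift is of order $\|A\|_{C^1}^{\ell}\,\delta^{\gamma}$; but inside a long orbit you cannot in general choose a sparse family of ``balanced'' times whose consecutive elements are both $\delta$-close in the base \emph{and} separated by a length $\ell$ small enough to make $\|A\|_{C^1}^{\ell}\,\delta^{\gamma}$ summably small, while at the same time keeping the number of splice points $o(n/\ell)$ so the accumulated mismatches stay $o(n)$ in the exponent. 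Shrinking $\ell$ to tame the drift destroys the shadowing precision; lengthening $\ell$ to get good shadowing lets the drift blow up. Some additional mechanism --- a bootstrap on the a priori growth rate, a clever choice of the fibre point on the closed orbit, an invariance-principle/holonomy argument at the level of invariant measures for $F$ --- is needed to break this, and your sketch does not supply it. Until that mechanism is made explicit and the drift estimate is shown to close, the proof of the key claim $\Lambda^+\le 0$ is incomplete. The rest of the proposal (the reduction, the use of the inverse cocycle, the H\"older continuity of $\Phi$) is fine and does not need changes.
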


\begin{proposition} \label{0expProp}
The conclusion of Theorem \ref{AKL} holds under the weaker assumption on 
the periodic data, that the set $\A_P$ is bounded in $\df \,^{1}(\M)$.
\end{proposition}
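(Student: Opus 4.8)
The plan is to reduce the bounded-periodic-data case to the identity-periodic-data case covered by Theorem~\ref{AKL}, by conjugating the cocycle with the solution of a periodic cohomological equation. More precisely, I would first pass to the derivative cocycle. For a $\df^{\,1}(\M)$-valued cocycle $\A$, consider the associated linear cocycle $D\A$ on the vector bundle over $X\times\M$ with fiber over $(x,t)$ equal to $\mathrm{Hom}(T_t\M,T_{\A_x(t)}\M)$, given by the chain rule $D_t\A_x^n = D_{\A_x^{n-1}(t)}A(f^{n-1}x)\circ\cdots\circ D_tA(x)$. The hypothesis that $\A_P$ is bounded in $\df^{\,1}(\M)$ means exactly that there is a constant $C$ with $\|D_t\A_p^k\|\le C$ and $\|(D_t\A_p^k)^{-1}\|\le C$ for every periodic point $p=f^kp$ and every $t\in\M$. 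The goal \eqref{0exp} is subexponential growth of $\|D_t\A_x^n\|$ for all $(x,t)$.

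The key step is to apply the Livšic-type periodic-data theorem for linear cocycles (as in \cite{KS10,S15}) together with a continuity argument over the compact fiber $\M$: since $D\A$ is a Hölder continuous $GL$-valued cocycle over the hyperbolic system $X\times\M$ fibered over $(X,f)$ — more precisely, over the hyperbolic system $(X,f)$ the cocycle is a $\df^{\,1+\gamma}$-diffeomorphism cocycle, and one uses that periodic orbits of $(X,f)$ together with the flexibility of choosing $t$ give control on the norms of $D_t\A_p^k$ along \emph{all} periodic orbits of the extended system restricted to the graph of a periodic orbit — uniform boundedness of the periodic data forces the upper and lower Lyapunov exponents of $D\A$ (with respect to any $f$-invariant measure, hence in particular along every orbit, by subadditivity and the Anosov closing lemma) to vanish. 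Concretely: for each ergodic $F$-invariant measure $\mu$ on $X\times\M$ projecting to an $f$-invariant measure on $X$, the top Lyapunov exponent $\lambda^+(\mu)$ of $D\A$ is approximated, via the Anosov closing lemma and Hölder continuity of $\A$ in $\df^{\,1+\gamma}$, by averages $k^{-1}\log\|D_{t}\A_p^k\|$ along nearby periodic orbits; boundedness of $\A_P$ in $\df^{\,1}(\M)$ bounds these uniformly by $k^{-1}\log C\to 0$, so $\lambda^+(\mu)\le 0$, and applying the same to $(D\A)^{-1}$ gives $\lambda^-(\mu)\ge 0$, whence all exponents vanish. Then by the uniform subexponential growth criterion (a consequence of the fact that all exponents of a Hölder cocycle over a hyperbolic system vanishing for all invariant measures implies pointwise subexponential growth — this is where one invokes \cite[Theorem 2.5]{AKL} itself, whose proof only uses that all periodic Lyapunov exponents vanish, not that the periodic data is trivial) one obtains \eqref{0exp} for all $(x,t)$.

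Alternatively, and perhaps more in the spirit of \cite{AKL}, I would examine the proof of Theorem~\ref{AKL} directly: it almost certainly proceeds by showing that the linearized periodic data being trivial forces vanishing of periodic Lyapunov exponents, and then deduces \eqref{0exp} from that. If that is the case, the proposition is essentially immediate — one observes that boundedness of $\A_P$ in $\df^{\,1}(\M)$ already gives $\lim_k k^{-1}\log\|D_t\A_p^k\|^{\pm1}=0$ along every periodic orbit (since $\|D_t\A_p^k\|^{\pm 1}\le C$ uniformly), which is the only consequence of the identity periodic data that is used — so one simply replays the argument of \cite{AKL} verbatim with "periodic data trivial" replaced by "periodic derivative data bounded". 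The main obstacle is the bookkeeping in this replay: I must check that nowhere in \cite{AKL} does the proof use finer information than vanishing of periodic derivative growth rates — in particular that it does not use that the \emph{cocycle itself} (not just its derivatives) returns to the identity along periodic orbits, e.g.\ in constructing a transfer map or invariant section. If some step does use the full identity periodic data, the fallback is the exponents-vanishing route of the previous paragraph, which only needs the boundedness hypothesis. I expect the cleanest writeup to combine both: state that the proof of \cite[Theorem 2.5]{AKL} shows vanishing of the Lyapunov exponents of $D\A$ over every $f$-invariant measure under the sole hypothesis that $\sup_{p,k}k^{-1}\log\|D_t\A_p^k\|^{\pm1}=0$, and note that boundedness of $\A_P$ in $\df^{\,1}(\M)$ trivially implies this, hence \eqref{0exp}.
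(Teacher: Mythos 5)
Your second route is essentially the paper's argument, phrased in the contrapositive. The paper's proof simply points to the first three paragraphs of the proof of Theorem 1.1 in Section 5 of \cite{AKL}: assuming \eqref{0exp} fails, that argument produces a periodic point $p=f^np$ and a point $t\in\M$ at which some singular value of $D_t\A_p^n$ is arbitrarily large, and this immediately contradicts boundedness of $\A_P$ in $\df^{\,1}(\M)$. That is exactly your observation that the only consequence of the identity periodic data used by \cite{AKL} is the vanishing of periodic derivative growth rates, which boundedness supplies; the paper just states where in \cite{AKL} this is visible and what the contradiction is, rather than re-running the whole argument.

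Your first route (show all Lyapunov exponents of $D\A$ vanish over every $F$-invariant measure, then deduce subexponential growth) is not what the paper does at this stage, and it is shakier as stated: the skew product $F$ on $X\times\M$ is not a hyperbolic system, so the Liv\v{s}ic-type periodic approximation theorems for linear cocycles over hyperbolic systems (Kalinin \cite{K11}, \cite{KS10,S15}) do not apply off the shelf. The correct order of logic in the paper is the reverse: Proposition~\ref{0expProp} is used to obtain \eqref{0exp}, which \emph{then} implies that all exponents of $\B$ vanish for every $F$-invariant measure, and only after that is the subexponential-growth criterion from \cite{Schr} invoked. So if you present the argument, stick with your second route and drop the first, or at least make clear that the approximation of exponents by periodic data over $X\times\M$ is exactly the nontrivial content of \cite[Theorem 2.5]{AKL} and not an independent input.
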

\begin{proof}
This can be seen in the first three paragraphs of the {\it Proof of Theorem 1.1} in Section 5
of \cite{AKL}. Indeed, assuming \eqref{0exp} does not hold, they show existence of a periodic
point $p=f^np$ in $X$ and $t\in \M$  for which some singular value of the derivative
$D_t\A_p^n$ is arbitrarily large. This contradicts boundedness of $\A_P$ in $\df \,^{1}(\M)$.
\end{proof}


We consider the  vector bundle  $\E$  over $X\times \M$ with fiber
$\E_{(x,t)} =T_t\M$ and the linear cocycle 
$$
\B_{(x,t)}= D_t\A_x \;\text{ on $\E\;$ over the map }\;F(x,t)=(fx, \A_x (t)).
$$
The iterates of $\B$ are 
given by  $\B^n_{(x,t)}: T_{t\,}\M \to T_{\A^n_x(t)\,}\M$, where
$$
\B^n_{(x,t)}= D_t\A_x^n =
D_{\A^{n-1}_x(t)}\A_{f^{n-1}x} \circ ... \circ D_{\A_x(t)}\A_{fx} \circ D_t\A_x.
$$

Since the periodic data set $\A_P$ is bounded in $\df \,^{q}(\M)$ and hence in $\df \,^{1}(\M)$,
Proposition \ref{0expProp} applies and yields \eqref{0exp}, which implies 
that all Lyapunov exponents of $\B_{(x,t)}$  are zero with respect to any $F$-invariant measure on 
$X\times \M$.  It is well-known that the latter implies sub-exponential growth of the norm, 
see e.g. \cite{Schr}.
More precisely, for each $\e>0$ there exists $K_\e$ such that for all $x\in X$,
$$
\sup\,\{\|D_t \A_x^n \|:t\in \M \}\le K_\e e^{|n|\e} \quad\text{for all }n\in \Z,
$$
where the case of $n<0$ follows from the similar  estimates of the inverse of $\A$.
Since the cocycle $\A_x$ is  bounded in $C^{k+1}$,  it follows that higher derivatives
also grow sub-exponentially, see e.g.  \cite[Lemma 5.5]{LW}:
for each $\e>0$ there exists $K_\e$ such that for all $x\in X$ and $1\le i \le k+1$,
$$
\sup\,\{\|D_t^i \A_x^n \|:t\in \M \}\le K_\e e^{|n|\e} \quad\text{for all }n\in \Z.
$$
Therefore, since $q\le k+1$,  for each $\e>0$ there exists $K_\e$ such that  for all $x\in X$,
$$
|\A_x^n |_{C^q} \le K_\e e^{|n|\e} \quad\text{for all }n\in \Z.
$$

\subsection{Holonomies.}$\;$
In this section we establish existence and regularity of holonomies for the cocycle $\A$. 
This is a fundamental property of cocycles with sufficiently slow growth. For $\dqm$-valued 
cocycles with integer $q \ge 2$, the holonomies in $\df \, ^{q-1}(\M)$ were constructed in \cite{BK} using 
estimates from in \cite{LW}. We consider an arbitrary $q>1$ and obtain holonomies in $\drm$ for any $r<q$. 
This allows us to obtain our main results with any $q>1$ and have an arbitrarily small loss of regularity.
Using the results from  \cite{BK} in our arguments would result in the loss of several derivatives as in 
 \cite[Theorem 1.1]{BK}. 
 
 Our proof follows the usual approach of showing that $\{(\A^n_y)^{-1} \circ \A^n_x\}$ is a Cauchy sequence
 by estimating the distances between consecutive terms. The main difficulty for cocycles of 
 diffeomorphisms is bounding the distortion of distances produced by composition on the left and on the right.
 These estimates are given in Lemmas \ref{dlLO lemma} and \ref{distance} below. The main difference with
  \cite{BK} is that we work with non-integer $q$ and $r$ and obtain a distortion estimate \eqref{dist} of 
  H\"older rather than Lipschitz type.  This allows us to loose arbitrarily small part of $q$,
  at the expense of obtaining lower H\"older regularity of the resulting holonomies along $W^s$ in (H3).
 
\begin{proposition} \label{holonomies} 
Let $\A$ be a $\dqm$-valued cocycle over a hyperbolic dynamical system, 
where $q=k+\gamma$ with  $k\in \N$ and $0<\gamma<1$. 
Let  $r=k+\alpha$, where $0<\alpha<\gamma $.
Suppose that $\A$ is $\beta$-H\"older in $\dqm$,
and for each $\e>0$ there exists $K_\e$ such that
\begin{equation} \label{C^q bunching}
  |\A_x^n|_{C^q} \le K_\e e^{\e n} \quad\text{ for all } n\in \N.
\end{equation}

Then for any $x\in X$ and $y\in W^s(x)$,
the limit 
\begin{equation} \label{holonomy def}
H^s_{x,y}=H^{\A,s}_{x,y}=\lim_{n\to+\infty} (\A^n_y)^{-1} \circ \A^n_x
\end{equation}
exists in $\drm$ and satisfies 
\begin{itemize}
\item[(H1)]  $H^s_{x,x}=\Id\,$ and $\,H^s_{y,z} \circ H^s_{x,y}=H^s_{x,z}$,\,\,
which imply $(H^s_{x,y})^{-1}=H^s_{y,x};$
\vskip.1cm
\item[(H2)]  $H^s_{x,y}= (\A^n_y)^{-1}\circ H^s_{f^nx ,\,f^ny} \circ \A^n_x\;$ 
for all $n\in \N$;
\vskip.1cm
\item[(H3)] There exists a constant $c$ such that for all  $x\in X$   and $y\in W^{s}_{\text{loc}}(x),$
$$
d_{C^r}(H^s_{\,x,y},\Id) \leq c\,d_X (x,y)^{\beta \rho}, \quad\text{where }\rho=q-r,
$$
 moreover,
\begin{equation} \label{Anx}
 d_{C^r}  \left( (\A^n_y)^{-1} \circ \A^n_x, \, \Id \right) \le  c\,  d_X(x,y)^{\beta\rho} 
  \quad\text{for all }n\in \N.
\end{equation}
\end{itemize}

 \end{proposition}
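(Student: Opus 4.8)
The plan is to construct $H^s_{x,y}$ as a limit of the telescoping sequence $h_n := (\A^n_y)^{-1} \circ \A^n_x$ and prove convergence in $\drm$ by showing that consecutive differences are summable. Writing $h_{n+1} \circ h_n^{-1}$ in terms of the generators, one sees that the "new" factor introduced at step $n$ is conjugated by $\A^n_y$ on the left and $(\A^n_x)^{-1}$ on the right (after canceling the inner pieces), and the uncorrected discrepancy is controlled by the $\beta$-H\"older continuity of $\A$ applied at the points $f^n x$ and $f^n y$, whose distance decays like $\la^n d_X(x,y)$ along the stable leaf. Thus the core estimate to establish is: for diffeomorphisms $g, \tilde g$ that are $C^q$-close with $d_{C^q}(g,\tilde g)$ small, and for an ambient diffeomorphism $\phi$ with controlled $|\phi|_{C^q}$, one has a distortion bound of the form
\begin{equation*}
d_{C^r}(\phi \circ g \circ \phi^{-1},\, \phi \circ \tilde g \circ \phi^{-1}) \le C\, |\phi|_{C^q}^{\,m}\, d_{C^q}(g,\tilde g)^{\rho/\gamma}
\end{equation*}
or some close variant trading a power of $|\phi|_{C^q}$ against a H\"older-type power of the perturbation size; this is precisely the content promised by Lemmas \ref{dlLO lemma} and \ref{distance} (the left- and right-composition distortion estimates). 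The loss from Lipschitz to H\"older exponent here — interpolating between the $C^k$ bound and the $C^q$ bound to get a $C^r$ bound at the cost of the factor $\rho/\gamma$ in the exponent — is what accounts for the exponent $\beta\rho$ appearing in (H3), and I expect this interpolation-with-conjugation estimate to be the main technical obstacle.

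Granting those distortion lemmas, the scheme is routine. First I would set up the telescoping: bound $d_{C^r}(h_{n+1}, h_n)$ by pulling the $C^q$-small discrepancy at time $n+1$ back through $\A^n_y$ and $(\A^n_x)^{-1}$, using \eqref{C^q bunching} to bound $|\A^n_x|_{C^q}, |\A^n_y|_{C^q} \le K_\e e^{\e n}$ and using that $d_{C^q}$ of the time-$(n+1)$ discrepancy is at most (a constant times) $d_X(f^n x, f^n y)^\beta \le (c\la^n d_X(x,y))^\beta$. The resulting bound on $d_{C^r}(h_{n+1},h_n)$ then has the shape $C\, e^{\e m n}\, \la^{\beta\rho n}\, d_X(x,y)^{\beta\rho}$ for an appropriate power $m$; choosing $\e$ small enough that $e^{\e m}\la^{\beta\rho} < 1$ makes the series converge geometrically, which simultaneously gives existence of the limit $H^s_{x,y}\in\drm$ and the estimate $d_{C^r}(H^s_{x,y},\Id) \le c\, d_X(x,y)^{\beta\rho}$ in (H3), as well as the uniform-in-$n$ bound \eqref{Anx} by summing the tail from index $n$ onward and noting $h_n$ itself is close to $\Id$. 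I should also verify the diffeomorphisms along the path stay $C^r$-close enough for \eqref{compare 1} and the distortion lemmas to apply, which follows by shrinking $d_X(x,y)$ on local stable leaves and using the cocycle equation to bootstrap to all of $W^s(x)$.

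The algebraic properties (H1) and (H2) come for free from the definition once the limit is known to exist. For (H1): $H^s_{x,x} = \lim (\A^n_x)^{-1}\circ\A^n_x = \Id$, and the cocycle identity $\A^n_x = \A^n_y \circ H^s_{x,y} + o(1)$-type reasoning — more precisely writing $(\A^n_z)^{-1}\circ\A^n_x = \big((\A^n_z)^{-1}\circ\A^n_y\big)\circ\big((\A^n_y)^{-1}\circ\A^n_x\big)$ and passing to the limit, using continuity of composition on $\drm$ among uniformly $C^r$-bounded diffeomorphisms — gives the cocycle relation $H^s_{y,z}\circ H^s_{x,y} = H^s_{x,z}$, whence $(H^s_{x,y})^{-1} = H^s_{y,x}$. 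For (H2): from $\A^{n+m}_x = \A^m_{f^n x}\circ \A^n_x$ one gets $(\A^{n+m}_y)^{-1}\circ\A^{n+m}_x = (\A^n_y)^{-1}\circ\big((\A^m_{f^n y})^{-1}\circ\A^m_{f^n x}\big)\circ\A^n_x$, and letting $m\to\infty$ yields $H^s_{x,y} = (\A^n_y)^{-1}\circ H^s_{f^n x, f^n y}\circ\A^n_x$. The only care needed is that these limit-passages are legitimate, i.e. that composition is continuous in $d_{C^r}$ on the relevant bounded sets, which follows from \eqref{compare 1} and \eqref{compare 2}.
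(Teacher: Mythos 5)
Your proposal follows essentially the same route as the paper: compare consecutive terms $(\A^n_y)^{-1}\circ\A^n_x$ and $(\A^{n+1}_y)^{-1}\circ\A^{n+1}_x$ via the two-sided composition distortion estimate (the paper's Lemma \ref{distance}, applied with $g=(\A^n_y)^{-1}$, $\tg=\A^n_x$, $h_1=\Id$, $h_2=(\A_{f^ny})^{-1}\circ\A_{f^nx}$), use the bunching bound \eqref{C^q bunching} together with $\beta$-H\"older continuity at $f^nx,f^ny$ to get geometric decay, sum, and deduce (H1)--(H3) and \eqref{Anx} by limits and telescoping. One small inaccuracy worth flagging: the paper's Lemma \ref{distance} carries exponent $\rho=q-r$ on $d_{C^r}(h_1,h_2)$ rather than exponent $\rho/\gamma$ on $d_{C^q}$ as you guessed, but since you hedged and your final exponent $\beta\rho$ in (H3) is right, this does not affect the argument.
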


The maps  $H^s_{x,y}$ are called the {\it stable holonomies of $\A$}.
It is convenient to view  $H^s_{x,y}$ as a $C^r$ diffeomorphism from $\M_x$ to $\M_y$,
i.e., from the fiber at $x$ to the fiber at $y$.
The unstable holonomies are defined similarly: for $y\in W^u(x)$,
$$
H^u_{x,y}=H^{\A,u}_{x,y}=\lim_{n\to-\infty} (\A^n_y)^{-1} \circ \A^n_x
$$

\begin{remark} 
For the conclusion to hold, we do not need \eqref{C^q bunching} to be satisfied for each $\e>0$,
just for a sufficiently small one so that $e^{2\e (1+ r)} \la^{\beta\rho}<1$, where 
$\la$ is as in \eqref{Anosov def}.
\end{remark}

First we give some estimates for distances and norms. 
Lemma \ref{dlLO lemma} follows directly from Propositions 5.5 in \cite{dlLO},
where the estimates are obtained for compositions of smooth maps between 
open sets in Euclidean spaces.

\begin{lemma}  \cite{dlLO} \label{dlLO lemma}
For any $r\ge 1$ there exist constants  $M_r$ and $M_r'$ such that for any $h,g \in C^r(\M)$,
$$
\begin{aligned}
  \| h\circ g\|_{C^r} &\le M_r' \left(1+ \| g\|_{C^1}^{r-1} \right) 
  \left( \|h\|_{C^1}\| g\|_{C^r} + \|h\|_{C^r}\| g\|_{C^1} \right) +  \|h\|_{C^0} \le \\
 & \le M_r \,  \|h\|_{C^r} (1+\| g\|_{C^r})^r .
 \end{aligned}
  $$
\end{lemma}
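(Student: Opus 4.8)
\textbf{Proof proposal for Lemma \ref{dlLO lemma}.}

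The plan is to reduce the statement to its Euclidean counterpart, namely Proposition 5.5 in \cite{dlLO}, which gives exactly the first displayed inequality for $C^r$ maps between open sets in Euclidean spaces, and then to derive the second (coarser) inequality by a simple algebraic estimate. First I would fix a finite atlas of coordinate charts on $\M$ whose domains have compact closure inside larger charts, together with a subordinate partition of unity, so that the manifold $C^r$ norms $\|\cdot\|_{C^r}$ defined in Section \ref{Crdist} are uniformly comparable (up to constants depending only on $r$, the atlas, and the background metric) to the maxima over charts of the usual Euclidean $C^r$ norms of the local representatives. The only subtlety is that $h\circ g$ is read in charts for its source and target that need not be adapted to the charts used for $g$ and for $h$ separately; one handles this by the standard observation that composing with the fixed, smooth transition maps of the atlas changes $C^r$ norms only by a fixed multiplicative constant and a fixed additive constant, because those transition maps have all derivatives bounded. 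After this bookkeeping, the estimate for $\|h\circ g\|_{C^r}$ on $\M$ follows from the Euclidean estimate of \cite[Proposition 5.5]{dlLO} applied chart by chart, with $M_r'$ absorbing the atlas constants; the additive term $\|h\|_{C^0}$ in our statement (which records the displacement-to-identity part $\max_t d_\M(h(t),t)$ built into the manifold norm) is produced by this chart comparison.

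For the second inequality, I would argue purely algebraically from the first. Write $a=\|g\|_{C^1}$, $b=\|g\|_{C^r}$, $c=\|h\|_{C^1}$, $e=\|h\|_{C^r}$, and note $1\le a\le b$, $1\le c\le e$, and $\|h\|_{C^0}\le \|h\|_{C^r}=e$ (since the $C^0$ part is one of the terms in the $C^r$ norm, up to the choice of norm on derivatives — here again only a fixed constant is involved, which I fold into $M_r$). Then
$$
M_r'(1+a^{r-1})(cb+ea)+\|h\|_{C^0}
\le M_r'(1+b^{r-1})\cdot 2eb + e
\le (2M_r'+1)\,e\,(1+b)^r,
$$
using $(1+a^{r-1})\le(1+b^{r-1})\le(1+b)^{r}$ and $cb+ea\le eb+eb=2eb\le e(1+b)^{r}$ for $r\ge 1$. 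Setting $M_r=2M_r'+1$ completes the chain of inequalities. I would also remark that the passage to $C^r$ with non-integer $r=k+\alpha$, which is the regime actually used later, is contained in \cite[Proposition 5.5]{dlLO} as well, the Euclidean proof of the Hölder term being the standard estimate for the $\alpha$-Hölder seminorm of a composition; no new ideas are needed beyond the chart comparison above, since the local trivialization via parallel transport used to define the Hölder term on $\M$ differs from the Euclidean difference quotient by a factor with bounded logarithmic derivative.

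The only genuine point requiring care — and hence the main obstacle — is the chart-comparison step: verifying that the manifold norm $\|\cdot\|_{C^r}$ of Section \ref{Crdist}, which mixes a displacement term with operator norms of intrinsic derivatives $D^i_t g : T_t\M\to T_{g(t)}\M$, is two-sided comparable to the maximum of Euclidean $C^r$ norms of local representatives, with constants independent of $g$ (not just of $g$ near the identity). This is where the additive $\|h\|_{C^0}$ term comes from and where one must check that the comparison constant does not degenerate as $g$ moves far in $C^0$; it does not, because the transition functions of a fixed atlas and the fixed background connection have all derivatives globally bounded on the compact manifold, so the chain rule produces only fixed polynomial expressions in those bounded quantities. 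Once this is in place, everything else is the cited Euclidean result plus the elementary inequality above.
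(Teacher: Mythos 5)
Your proposal is correct and follows essentially the same route as the paper, which simply invokes Proposition 5.5 of \cite{dlLO} for the first inequality (the chart comparison you spell out being the implicit transfer to the manifold norms of Section \ref{Crdist}) and treats the second inequality as an elementary consequence. One small slip: the intermediate claim $2eb\le e(1+b)^{r}$ fails for $r=1$ and $b>1$, but since $1+b^{r-1}\le 2(1+b)^{r-1}$ and $b\le 1+b$ give $(1+b^{r-1})\cdot 2eb\le 4e(1+b)^{r}$, the chain still closes with, say, $M_r=4M_r'+1$, so the conclusion is unaffected.
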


The next lemma relies on further results  of \cite{dlLO} which involve 
differences of functions. We apply these results to close diffeomorphisms 
using estimate \eqref{compare 1}, and so we need to ensure sufficient 
closeness of the diffeomorphisms $h_1$ and $h_2$, as well as that of the compositions
$g\circ h_1\circ \tg$ and  $g\circ h_2 \circ \tg$.

\begin{lemma}\label{distance} 
Let $q=k+\gamma$, $r=k+\a$, and $\rho=q-r$, where $k\in \N$ and $0\le\a<\gamma\le1.$
There exists a constant $M=M(r,q,\M,K)$ such that 
for any $g, \tg \in \dqm$ and $h_1, h_2\in \drm$  with $|h_1|_{C^r}, |h_2|_{C^r}\le K,$
\begin{equation} \label{dist}
\begin{aligned}
&d_{C^r} (g\circ h_1\circ \tg,\, g\circ h_2 \circ \tg) \le \\
&\le M \left( \| g\|_{C^q} (1+\| \tg\|_{C^r})^r + \| \tg^{-1}\|_{C^q} (1+ \| g^{-1}\|_{C^r})^r \right)
 \cdot d_{C^r}(h_1, h_2)^\rho
\end{aligned}
\end{equation} 
provided that $d_{C^r}(h_1, h_2)\le \delta_0 |h_1|_{C^r}^{-1}$ and  the right hand side 
of \eqref{dist} is less than 
\begin{equation} \label{delta 0}
\delta_0 \left( M_r^{2}\, (1+| h_1|_{C^r})^{r}(   \|g\|_{C^r}  \, (1+\| \tg\|_{C^r})^r +
 \|\tg^{-1}\|_{C^r}   \, (1+\| g^{-1}\|_{C^r})^r )\right)^{-1}, 
\end{equation} 
 where $ \delta_0$ is as in \eqref{compare 1}.  
\end{lemma}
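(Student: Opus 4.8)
The plan is to prove Lemma~\ref{distance} by reducing the problem of comparing the two triple compositions to the already-available estimates of \cite{dlLO} on how composition distorts differences of functions, and then translating between the intrinsic distance $d_{C^r}$ and the difference $\|\cdot\|_{C^r}$ via \eqref{compare 1}. First I would write
$$
g\circ h_1\circ \tg - g\circ h_2\circ \tg = (g\circ h_1 - g\circ h_2)\circ \tg,
$$
and deal separately with the ``left composition'' by $g$ and the ``right composition'' by $\tg$. For the left composition, the key is the estimate from \cite{dlLO} of the form $\|g\circ h_1 - g\circ h_2\|_{C^r}\lesssim \|g\|_{C^q}\,(\text{polynomial in }\|h_i\|_{C^r})\cdot \|h_1-h_2\|_{C^r}^{\rho}$, where the H\"older exponent $\rho=q-r=\gamma-\a$ appears precisely because $g$ is only $C^q$ while we want to measure the output in $C^r$ and gain the extra $\rho$ from the H\"older modulus of the top derivative of $g$; here the bound $|h_i|_{C^r}\le K$ is what keeps the polynomial factors under control and is absorbed into the constant $M=M(r,q,\M,K)$. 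For the right composition by $\tg$, I would use Lemma~\ref{dlLO lemma} (the chain-rule estimate) to bound $\|\psi\circ\tg\|_{C^r}\le M_r\|\psi\|_{C^r}(1+\|\tg\|_{C^r})^r$ applied to $\psi = g\circ h_1-g\circ h_2$, which produces the $(1+\|\tg\|_{C^r})^r$ factor. Adding the analogous estimate for the inverses, $(g\circ h_1\circ\tg)^{-1}=\tg^{-1}\circ h_1^{-1}\circ g^{-1}$, and using that $|h_i^{-1}|_{C^r}\le K$ too (this is why the hypothesis is stated as $|h_i|_{C^r}\le K$, bounding both $h_i$ and $h_i^{-1}$), gives the symmetric term with $\|\tg^{-1}\|_{C^q}$ and $(1+\|g^{-1}\|_{C^r})^r$.

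The second main step is the bookkeeping with \eqref{compare 1}. The estimates of \cite{dlLO} are stated for genuine differences $\|\cdot\|_{C^r}$ of maps between Euclidean domains, so to use them I must pass from $d_{C^r}(h_1,h_2)$ to $\|h_1-h_2\|_{C^r}$, which requires $h_1,h_2$ to be sufficiently $C^r$-close — this is exactly the hypothesis $d_{C^r}(h_1,h_2)\le\delta_0|h_1|_{C^r}^{-1}$, under which \eqref{compare 1} gives $\|h_1-h_2\|_{C^r}+\|h_1^{-1}-h_2^{-1}\|_{C^r}\le\kappa\,d_{C^r}(h_1,h_2)$. Then, having produced a bound on $\|g\circ h_1\circ\tg - g\circ h_2\circ\tg\|_{C^r}+\|(g\circ h_1\circ\tg)^{-1}-(g\circ h_2\circ\tg)^{-1}\|_{C^r}$, I need to convert it back into a bound on $d_{C^r}$ of the compositions; the first inequality of \eqref{compare 1} does this, but only when the compositions are themselves sufficiently close, which is precisely why the smallness condition \eqref{delta 0} on the right-hand side is imposed — the factor $M_r^2(1+|h_1|_{C^r})^r(\|g\|_{C^r}(1+\|\tg\|_{C^r})^r+\|\tg^{-1}\|_{C^r}(1+\|g^{-1}\|_{C^r})^r)$ in \eqref{delta 0} is an upper bound, via Lemma~\ref{dlLO lemma} applied twice (once for each composition), for $|g\circ h_1\circ\tg|_{C^r}$, so that the hypothesis guarantees $d_{C^r}$ of the compositions is below $\delta_0|g\circ h_1\circ\tg|_{C^r}^{-1}$ and \eqref{compare 1} applies. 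Keeping the constants in \eqref{dist} and \eqref{delta 0} consistent with \cite{dlLO} and \eqref{compare 1}, and absorbing the dependence on $K$ correctly, is routine but must be done carefully.

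The main obstacle I expect is pinning down the precise form of the difference-of-compositions estimate from \cite{dlLO} and checking that the exponent there really is $\rho=q-r$ with the claimed polynomial dependence on the $C^r$-norms of the inner maps — in particular that the ``loss'' is entirely H\"older-in-$h_1-h_2$ and does not also cost a fractional power of $\|g\|_{C^q}$ or introduce cross terms. Once that input is cited correctly (Proposition~5.5 of \cite{dlLO} and its companions), the rest is a matter of composing the two one-sided estimates, handling the inverses symmetrically, and verifying that the two smallness hypotheses are exactly what is needed to enter and exit \eqref{compare 1}. I would organize the write-up as: (1) reduce to close diffeomorphisms and invoke \eqref{compare 1} forward; (2) estimate the left composition $g\circ h_1 - g\circ h_2$ in $C^r$ using \cite{dlLO}; (3) estimate the right composition by $\tg$ using Lemma~\ref{dlLO lemma}; (4) repeat for inverses; (5) sum and invoke \eqref{compare 1} backward, using \eqref{delta 0} to justify it.
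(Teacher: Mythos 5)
Your plan is correct and follows essentially the same route as the paper's proof: decompose as $(g\circ h_1-g\circ h_2)\circ\tg$, invoke the H\"older difference-of-compositions estimate from \cite{dlLO} (Proposition 6.2(iii)) for the left factor, Lemma~\ref{dlLO lemma} for the right factor, repeat for inverses, and pass between $d_{C^r}$ and $\|\cdot\|_{C^r}$ via \eqref{compare 1}, using \eqref{delta 0} to justify the backward direction by bounding $|g\circ h_1\circ\tg|_{C^r}$ through two applications of Lemma~\ref{dlLO lemma}. The only minor clarification: the cited bound from \cite{dlLO} already absorbs the polynomial dependence on $\|h_i\|_{C^r}$ into the constant $M'=M'(r,q,\M,K)$, so no extra polynomial factor appears explicitly in that step.
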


\begin{proof} 
We use \eqref{compare 1} to estimate the distance between close diffeomorphisms 
together with the following estimate from \cite[Propositions 6.2(iii)]{dlLO}:
$$
  \| g\circ  h_1 - g\circ h_2\|_{C^r} \le 
  M' \,\|g\|_{C^q} \,\|  h_1 -  h_2\|_{C^r} ^\rho,
$$
where $M'=M'(r,q,\M,K)$. Using this and Lemma \ref{dlLO lemma} we obtain
$$
\begin{aligned}
 & \|g\circ h_1\circ \tg-g\circ h_2\circ \tg\|_{C^r} = \|(g\circ h_1- g\circ h_2)\circ \tg\|_{C^r}\le \\
 &\le M_r \|g\circ h_1- g\circ h_2\|_{C^r} \cdot (1+ \|\tg\|_{C^r})^r  \le \\
 &\le M_r M' \,\|g\|_{C^q} \,\|  h_1 -  h_2\|_{C^r} ^\rho \cdot (1+ \|\tg\|_{C^r})^r 
 =M'' \,\|g\|_{C^q}(1+ \|\tg\|_{C^r})^r \cdot \|  h_1 -  h_2\|_{C^r} ^\rho
 \end{aligned}
$$
It follows that 
$$
 \begin{aligned}
&\|g\circ h_1\circ \tg-g\circ h_2\circ \tg\|_{C^r}+
\|\tg^{-1}\circ h_1^{-1}\circ g^{-1}-\tg^{-1}\circ h_2^{-1}\circ g^{-1}\|_{C^r}\le\\
&\le  M'' \, \| g\|_{C^q} \,(1+\| \tg\|_{C^r})^r\cdot  \|h_1-h_2\|_{C^r}^\rho \,+\\
& \qquad\qquad \qquad \qquad\quad
 +M'' \, \| \tg^{-1}\|_{C^q} \,(1+\| g^{-1}\|_{C^r})^r\cdot  \|h_1^{-1}-h_2^{-1}\|_{C^r}^\rho \le\\
 &\le M'' \left( \| g\|_{C^q} \,(1+\| \tg\|_{C^r})^r + \| \tg^{-1}\|_{C^q} \,(1+\| g^{-1}\|_{C^r})^r \right)
 \cdot \kappa\, d_{C^r}(h_1, h_2)^\rho.
 \end{aligned} 
$$
By \eqref{compare 1}, this yields the same estimate for 
$ \kappa^{-1} d_{C^r} (g\circ h_1\circ \tg,\, g\circ h_2 \circ \tg)$ and gives \eqref{dist} with 
$M=M''\kappa^2$, provided that this estimate is at most 
$\delta_0 \cdot |g\circ h_1\circ \tg|_{C^r}^{-1}$.  The latter follows from the last assumption 
of the lemma and the following estimate for $|g\circ h_1\circ \tg|_{C^r}$.
 Applying Lemma \ref{dlLO lemma} twice, we get
$$
\begin{aligned}
& \|g\circ h_1\circ \tg\|_{C^r} = \|(g\circ h_1)\circ \tg\|_{C^r}\le
 M_r \,  \|g\circ h_1\|_{C^r} (1+\| \tg\|_{C^r})^r  \le \\
 & \le  M_r^2\,  \|g\|_{C^r} (1+\| h_1\|_{C^r})^r  \, (1+\| \tg\|_{C^r})^r . 
  \end{aligned}
$$
Similarly, 
$$ 
\|(g\circ h_1\circ \tg)^{-1}\|_{C^r} 
\le  M_r^2\,  \|\tg^{-1}\|_{C^r} (1+\| h_1^{-1}\|_{C^r})^r  \, (1+\| g^{-1}\|_{C^r})^r , 
$$
 and hence
$$
 |g\circ h_1\circ \tg|_{C^r}  \le M_r^2\,  (1+| h_1|_{C^r})^r
 \left( \|g\|_{C^r} (1+\| \tg\|_{C^r})^r  +  \|\tg^{-1}\|_{C^r}  (1+\| g^{-1}\|_{C^r})^r \right ).
 $$
\end{proof}

\noindent {\bf Proof of Proposition \ref{holonomies}}.
We  fix  $x\in X$ and construct $H^s_{x,y}$ for any $y\in W_{loc}^s(x)$ sufficiently close to $x$. 
Then the map $H^s$ can be 
extended to the whole leaf $W^s(x)$ by the invariance property (H2).
\vskip.1cm

For  $y \in W_{loc}^s(x)$, we have $d_X (f^nx, f^ny)\le d_X(x,y) \lambda^n$  for all $n\in \N$,
where  $0<\lambda<1$ is as in \eqref{Anosov def}.
We consider the sequence of diffeomorphisms 
$\{ (\A^n_y)^{-1}\circ  \A^n_x \}$ and show that it is Cauchy in $\drm$.

We apply  Lemma \ref{distance} with $h_1=\Id$ and $h_2=  (\A_{f^ny})^{-1} \circ \A_{f^nx}\,$
to estimate the distance between consecutive terms. We will verify 
assumption \eqref{delta 0} later.
 $$
 \begin{aligned}
 & d_{C^r} \left( (\A_y^n)^{-1}  \circ \A_x^n ,\, (\A_y^{n+1})^{-1} \circ \A_x^{n+1}\right)=\\
 &= d_{C^r}\left( (\A_y^n)^{-1}\circ\Id\circ \A_x^n ,\,  (\A_y^n)^{-1}
 \circ(\A_{f^ny})^{-1} \circ \A_{f^nx}\circ \A_x^n\right) \le\\
&\le M  \left( \| (\A_y^n)^{-1}\|_{C^q} (1+\| \A_x^n\|_{C^r})^r + 
\| (\A_x^n)^{-1}\|_{C^q} (1+\| \A_y^n\|_{C^r})^r \right) \times\\
&\hskip6cm \times  d_{C^r} \left( \Id, (\A_{f^ny})^{-1} \circ \A_{f^nx} \right)^\rho.
 \end{aligned}  
 $$
 Using condition \eqref{C^q bunching} we estimate 
$$
\| (\A_y^n)^{-1}\|_{C^q}  (1+\| \A_x^n\|_{C^r})^r\le 
K_\e e^{\e n} (1+K_\e e^{\e n})^r \le K_\e' \,e^{\e n (1+ r)}.
$$
 Thus we have 
\begin{equation}\label{Cauchy}
 \begin{aligned}
 &d_{C^r} \left( (\A_y^n)^{-1}  \circ \A_x^n ,\, (\A_y^{n+1})^{-1} \circ \A_x^{n+1}\right)
 \le  K_\e'' e^{\e n (1+ r)} \cdot \dist (f^n y, f^n x)^{\beta\rho} \le\\
&\le K_\e''  e^{\e n (1+ r)} \cdot  ( d_X(x,y) \lambda^n)^{\beta\rho} 
  =K_\e''\, d_X(x,y)^{\beta\rho} \cdot \theta^n, \;\text{ where }\theta=e^{\e (1+ r)} \la^{\beta\rho}.
 \end{aligned}
\end{equation}
To verify assumption \eqref{delta 0}, we similarly estimate 
 $$
 \| (\A_y^n)^{-1}\|_{C^r} (1+\| \A_x^n\|_{C^r})^{r} + 
\| (\A_x^n)^{-1}\|_{C^r} (1+\| \A_y^n\|_{C^r})^{r}  \le 2K_\e'\, e^{\e n (1+ r)}.
 $$
  We take $\e>0$ such that $\theta \,e^{\e (1+ r)}<1$, and in particular $\theta <1$.
 Since 
 $$
 K_\e''\,d_X(x,y)^{\beta\rho} \cdot \theta^n <  K_\e''\,d_X(x,y)^{\beta\rho} \cdot e^{-n\e (1+ r)},
 $$
  we conclude that  \eqref{delta 0} is satisfied
of all $n\in \N$ provided that $d_X(x,y)$ is small enough.
\vskip.1cm

 Therefore, by \eqref{Cauchy}, \,$\{ (\A^n_y)^{-1}\circ  \A^n_x \}$ is a 
 Cauchy sequence in $\drm$, and so it has a limit there.
 Properties  (H1) and (H2) are easy to verify. Properties
(H3) and \eqref{Anx} are obtained as follows.
For every $n\in \N$ we have
$$
 \begin{aligned}
& d_{C^r}    \left( (\A^n_y)^{-1} \circ \A^n_x, \, \Id \right)= d_{C^r}   
 \left( (\A^n_y)^{-1} \circ \A^n_x, \,  (\A^0_y)^{-1}\circ  \A^0_x  )\right)\le \\
&\le \sum_{k=0}^{n-1} d_{C^r}   \left( (\A_y^k)^{-1}  \circ \A_x^k ,\, (\A_y^{k+1})^{-1} \circ \A_x^{k+1}\right) 
\le  K_\e'' d_X(x,y)^{\beta\rho} \sum_{n=0}^\infty \theta^n \le  c\, d_X(x,y)^{\beta\rho}. \end{aligned}
$$
Taking the limit as $n\to\infty$  we obtain  $ d_{C^r}(H^s_{\,x,y},\Id)  \le  c\, d_X(x,y)^{\beta\rho}.$
 $\QED$



\subsection{Boundedness of the cocycle.} $\;$
We assume that the periodic data set bounded in $|\cdot |_{C^q}$ 
in Proposition \ref{Cr bdd} and bounded in $|\cdot |_{C^1}$ in Proposition \ref{C1 bdd}.

\begin{proposition} \label{Cr bdd}
Suppose that a cocycle $\A$ satisfies the assumptions of Proposition \ref{holonomies}
and its periodic data set $\A_P$ is bounded in $\dqm$, where
$q=k+\gamma$, $k\in \N$, $0<\gamma<1$.  
Then its value set $\A_X$ is bounded in 
 $\drm$ for any $r<q$.
\end{proposition}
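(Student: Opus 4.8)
The plan is to use the local product structure of the hyperbolic system together with the stable and unstable holonomies from Proposition \ref{holonomies} to reduce boundedness of $\A_X$ to boundedness of $\A_P$. The key observation is the standard one for Liv\v{s}ic-type arguments: a transitive hyperbolic system has a dense orbit that shadows periodic orbits, so it suffices to control the cocycle along an orbit segment by comparing it, via holonomies, to a return value along a nearby periodic orbit.

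\medskip

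\noindent First I would fix $r$ with $r<q$, and (shrinking $\gamma$ to $\alpha$ if necessary) write $r=k+\alpha$ with $0<\alpha<\gamma$, so that Proposition \ref{holonomies} supplies stable and unstable holonomies $H^s,H^u$ that are $C^r$-diffeomorphisms, uniformly bounded on local leaves by (H3), and satisfy the equivariance (H2). Given a point $x\in X$ and $n\in\N$, the goal is to bound $|\A_x^n|_{C^r}$. Using the Anosov closing lemma, I would find a periodic point $p=f^m p$ with $m$ comparable to $n$ (in the subshift case one just closes up the word), such that the orbit segment $x,fx,\dots,f^{n}x$ stays exponentially close to the orbit segment $p,fp,\dots$. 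The holonomy-invariance (H2), combined with the fact that $H^s_{z,z'}$ and $H^u_{z,z'}$ are uniformly $C^r$-close to $\Id$ when $z,z'$ are close, lets me write $\A_x^n$ as a composition of the form $H\circ\A_p^m\circ H'$, where $H,H'$ are products of stable/unstable holonomies between nearby points, hence uniformly bounded in $C^r$. Then Lemma \ref{dlLO lemma} gives $|\A_x^n|_{C^r}\le M_r\,|H|_{C^r}(1+|\A_p^m\circ H'|_{C^r})^r\le C\,(1+|\A_p^m|_{C^r})^{r^2}$, and since $\A_p^m\in\A_P$ is bounded in $C^q\supset C^r$, the right-hand side is bounded independently of $x$ and $n$. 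The inverse $(\A_x^n)^{-1}$ is handled symmetrically, giving a uniform bound on $|\A_x^n|_{C^r}$.

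\medskip

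\noindent There is one technical point requiring care. To pass from $x$ to a genuine periodic point one typically needs to write $\A_x^n$ in terms of $\A_z^n$ for $z$ on the stable leaf of $x$ and on the unstable leaf of a periodic point; concretely, $\A_x^n=H^s_{f^n z,\,f^n x}\circ\A_z^n\circ H^s_{x,z}$ for $z\in W^s(x)$, and a similar identity along $W^u$. Iterating this to reach a periodic orbit whose orbit segment shadows that of $x$ requires that all the intermediate holonomies be between points at controlled (exponentially small along the orbit) distance, so that their $C^r$-sizes, and those of their finite products, stay uniformly bounded via (H3). This is where the local product structure of Section \ref{hyperbolic} enters: the bracket $[x,z]=W^s_{loc}(x)\cap W^u_{loc}(z)$ lets one interpolate between $x$ and a periodic $p$ using one stable and one unstable holonomy step, each of uniformly bounded size. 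The main obstacle is thus purely bookkeeping: organizing the closing-lemma estimate so that the total holonomy factor picked up has $C^r$-size bounded by a constant independent of $n$, rather than growing with the number of interpolation steps. Since each application of (H3) contributes a factor $c\,d_X(\cdot,\cdot)^{\beta\rho}\le c$ and there are only two holonomy steps per endpoint (one stable, one unstable), this is manageable; the exponents never compound because we do not iterate the interpolation, we use the single-step product structure.

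\medskip

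\noindent Finally, I would note that transitivity is used to reduce from all $x\in X$ to a dense set, after which the $C^r$ bound extends by continuity of $\A$ (H\"older continuity in $\dqm$ implies continuity in $\drm$), and the cocycle relation $\A_x^{n+k}=\A_{f^kx}^n\circ\A_x^k$ together with Lemma \ref{dlLO lemma} would, if needed, let one bootstrap bounds on generators to bounds on all iterates. The conclusion is that $\A_X$ is bounded in $\drm$. The analogous statement with $C^1$ in place of $C^q$ (Proposition \ref{C1 bdd}) should follow the same scheme but using only $C^1$-holonomies, which exist under the weaker $C^1$-boundedness of $\A_P$ via Proposition \ref{0expProp}.
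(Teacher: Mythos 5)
Your overall strategy (compare an iterate of $\A$ to a periodic return value through one stable and one unstable holonomy step, then use the composition estimates) is the right one, but the step that launches it fails as stated: the Anosov Closing Lemma (Lemma \ref{closing}) produces a periodic point only when the orbit segment nearly closes up, i.e.\ when $d_X(x,f^nx)<\delta'$. For an arbitrary $x\in X$ and $n\in\N$ there is no such near return, so the lemma gives no periodic orbit of period comparable to $n$ shadowing $x,fx,\dots,f^nx$; that would require the specification property, which you neither invoke nor establish (your parenthetical about ``closing up the word'' is exactly specification in the subshift case, but the closing lemma you cite does not deliver it for Anosov diffeomorphisms or hyperbolic sets). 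The paper circumvents this by working on a single dense orbit $O(z)$: whenever $d_X(f^{n_1}z,f^{n_2}z)<\delta'$, the closing lemma does apply to the near-return segment $w=f^{n_1}z$, $k=n_2-n_1$, giving a periodic $p=f^kp$ whose whole segment stays $c\delta$-close to that of $w$; then the bracket $y=W^s_{loc}(p)\cap W^u_{loc}(w)$, the estimate \eqref{Anx}, and Lemma \ref{distance} (whose smallness hypothesis \eqref{delta 0} is verified using the $C^q$-boundedness of $\A_P$) bound $\A^k_y$ and then $\A^k_w$; note that the unstable comparison needs $f^ky$ and $f^kw$ to be close, which is exactly what the near-return geometry guarantees and what is unverified in your setup. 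An arbitrary iterate along the dense orbit is then handled not by shadowing but by the decomposition $\A^n_z=\A^{n-j}_{f^jz}\circ\A^j_z$, where $j$ with $|j|\le m$ is chosen so that $d_X(f^nz,f^jz)\le\delta'$; the first factor is itself a bounded near-return value and the second ranges over a finite set, and finally the bound passes to all of $X$ by continuity of $x\mapsto|\A^n_x|_{C^r}$ for each fixed $n$ and density of $O(z)$.

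Your closing remark that the cocycle relation would ``bootstrap bounds on generators to bounds on all iterates'' is not a remedy: a uniform bound on the generator never yields a uniform bound on all iterates (that is the whole content of the proposition); the decomposition above works only because one factor is controlled by the periodic data via the closing lemma and the other comes from a fixed finite list. Apart from this, your use of (H2)/(H3) together with Lemma \ref{dlLO lemma} to transfer boundedness from the periodic value to the bracket point is essentially sound; the paper makes the same transfer quantitatively through Lemma \ref{distance} and \eqref{compare 2}, which is where the H\"older exponent $\rho=q-r$ enters and why the conclusion is only for $r<q$. But as written, the proof has a genuine hole at the shadowing step and in the passage from controlled near-return segments to arbitrary pairs $(x,n)$.
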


\begin{proof} 
The base systems that we are considering satisfy the following {\it closing property.}

\begin{lemma} (Anosov Closing Lemma\,  \cite[6.4.15-17]{KH}) \label{closing}\,
Let $(X,f)$ be a topologically mixing diffeomorphism 
of a locally maximal hyperbolic set. Then there exist  
constants $c,\, \delta' >0$ such that for any $x \in X$ and $k\in\N$ with 
$\dist (x, f^k x) < \delta'$ there exists a periodic point $p \in X$ with 
$f^k p =p$ such that the orbit segments $x, fx, ... , f^k x$ and 
$p, fp, ... , f^k p\,$ satisfy
$$
d_X (f^i x, f^i p) \le c\,  d_X (x, f^k x)\;\,\text{ for every $i=0, ... , k$.}
$$
\end{lemma}
\noindent For subshifts of finite type 
this property can be observed directly.

Since the map $f$ is transitive,  we can consider a point $z\in X$ whose orbit
$O(z)=\{ f^n z:\; n\in \Z \}$ is dense in $X$. 
Let $f^{n_1}z$ and  $f^{n_2}z$ be two points of $O(z)$ with 
$\delta:=\dist(f^{n_1}z, f^{n_2}z)< \delta'$, where $\delta'$ is 
as in  Lemma \ref{closing}.
We assume that $n_1<n_2$ and denote 
$$
w=f^{n_1}z \quad\text{and} \quad k=n_2-n_1, \quad \text{so that }\;
\delta =\dist(w, f^k w)< \delta'. 
$$
Then there exists $p\in X$ with $f^k p =p$ 
such that $\dist(f^i w, f^i p) \le c\delta$ for $i=0, ... , k$.
\vskip.05cm

We fix $\e>0$ and take $r_1=q-\e$. 
Let $y$ be the point of intersection of  $W^s_{loc}(p)$ and  $W^u_{loc}(w)$.
Then by Proposition \ref{holonomies} (H3)  there exists a constant $c_1$
independent of $p$ and $y$ such that 
$$
   d_{C^{r_1}}((\A^k_p)^{-1}\circ \A^k_y,\, \Id)\le  c_1\delta^{\beta \e}.
$$
We use Lemma \ref{distance} with $g=\A^k_p$, $h_1 =(\A^k_p)^{-1}\circ \A^k_y$, 
and $h_1 =\Id=\tg.$
Since the set $\A_P$ is bounded in $\dqm$, all norms in \eqref{dist} and \eqref{delta 0}
are similarly uniformly bounded and, in particular, condition \eqref{delta 0} is satisfied
if $\delta'$ is chosen small enough. We conclude  that 
$$
   d_{C^{r_1}}(\A^k_y,\, \A^k_p) \le
   c(|\A_p^k|_{C^r})\cdot d_{C^{r_1}}((\A^k_p)^{-1}\circ \A^k_y,\, \Id)^{\e} \le  c_1\delta^{\beta \e^2}.
$$
Thus the set $\{ |\A_y^k |_{C^{r_1}}\}$ is bounded by \eqref{compare 2}.
A similar argument, using unstable holonomies, shows that the set  $\{|A_w^k |_{C^{r_2}}\}$ 
is  bounded for $r_2=r_1-\e=q-2\e.$

Therefore, there exists a constant $c_2$ such that whenever 
$\delta:=\dist(f^{n_1}z, f^{n_2}z)< \delta'$ with $k=n_2-n_1>0$,
we have $|\A^k_{f^{n_1}z}|_{C^{r_2}}<c_2$.

We take $m\in \N$ such that the set  $\{f^j z:\; |j|\le m\}$ is 
$\delta'$-dense in $X$ and set
$$
c_m =\max \,\{\,|\A^j_z|_{C^{r_2}}:\; |j|\le m\}.
$$
Let  $n>m$. Then there exists $j$ with $|j|\le m$, such that $d_X (f^n z, f^j z)\le \delta'$.\,
Using Lemma \ref{dlLO lemma}, we obtain that
$$
\begin{aligned}
\|\A^n_z\|_{C^{r_2}} & = \|\A^{n-j}_{f^j{z}} \circ \A^j_z\|_{C^{r_2}} \le
M_{r_2} (1+ \|\A^j_z\|_{C^{r_2}})^{r_2} \|\A^{n-j}_{f^j{z}}\|_{C^{r_2}} \le\\
&\le M_{r_2} (1+ c_m)^{r_2} \cdot c_2 \le c_3.
 \end{aligned}
$$
The cases of $\{\|(\A^n_z)^{-1}\|_{C^{r_2}}\}$ and  of $n<-m$ are similar.
Thus there exists a constant $c$ such that 
$$
|\A^n_z|_{C^{r_2}}\le c\quad\text{for all }n\in \Z.
$$
Since $O(z)$ is dense in $X$ and $|\A^n_z|_{C^{r_2}}$  is continuous on $X$ for each $n$,
we conclude that $|\A^n_x|_{C^{r_2}}$ is uniformly bounded in $x \in X$ and $n \in \Z$. 
Since $\e$ is arbitrary, the proposition follows.
\end{proof}

\begin{proposition} \label{C1 bdd}
Suppose that a cocycle $\A$ satisfies the assumptions of Proposition \ref{holonomies}
with $q=1+\gamma$, $\gamma>0$,
and its periodic data set $\A_P$ is bounded in $\donem$.  
Then its value set $\A_X$  is also bounded in  $\donem$.
\end{proposition}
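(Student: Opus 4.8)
The plan is to follow the scheme of the proof of Proposition~\ref{Cr bdd}, making one essential change. There, the step that converts a bound on the periodic value $\A_p^k$ into a bound on a nearby value $\A_y^k$ used Lemma~\ref{distance} with $g=\A_p^k$, which requires control of $\|\A_p^k\|_{C^q}$; now only $|\A_p^k|_{C^1}$ is available. I will instead use the holonomies of $\A$, which still exist in $\drm$ with $r=1+\alpha$, $0<\alpha<\gamma$, by Proposition~\ref{holonomies} (its hypotheses, in particular the bunching \eqref{C^q bunching}, are precisely the standing assumptions here), together with the elementary remark contained in Lemma~\ref{dlLO lemma} that composition on the left or on the right by a diffeomorphism $C^1$-close to $\Id$ does not destroy $\donem$-boundedness. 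Throughout, $\rho=q-r=\gamma-\alpha>0$, and the relevant inputs are the holonomy estimates \eqref{Anx}, (H3), and their unstable analogues.

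As in Proposition~\ref{Cr bdd}, I fix a point $z$ with dense orbit, pick $n_1<n_2$ with $\delta:=\dist(f^{n_1}z,f^{n_2}z)<\delta'$, set $w=f^{n_1}z$ and $k=n_2-n_1>0$, and let $p=f^kp$ be the periodic point supplied by the Anosov Closing Lemma, so $\dist(f^iw,f^ip)\le c\,\delta$ for $0\le i\le k$. Let $y$ be the intersection point of $W^s_{\mathrm{loc}}(p)$ and $W^u_{\mathrm{loc}}(w)$, so that $d_X(p,y)$ and $d_X(w,y)$ are at most a constant times $\delta$. The first step is a uniform bound on $|\A_y^k|_{C^1}$: by \eqref{Anx} the diffeomorphism $\psi:=(\A_y^k)^{-1}\circ\A_p^k$ satisfies $d_{C^r}(\psi,\Id)\le c\,d_X(p,y)^{\beta\rho}$, so if $\delta'$ is small enough then, by \eqref{compare 1}, $\psi$ and $\psi^{-1}$ are $C^1$-close to $\Id$ and $|\psi|_{C^1}$ is bounded by a universal constant. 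Since $\A_y^k=\A_p^k\circ\psi^{-1}$ and $(\A_y^k)^{-1}=\psi\circ(\A_p^k)^{-1}$, and $|\A_p^k|_{C^1}$ is bounded by the hypothesis on $\A_P$, Lemma~\ref{dlLO lemma} gives a bound on $|\A_y^k|_{C^1}$ uniform over all admissible $z,n_1,n_2$.

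The second step transfers this bound to $\A_w^k$ through the unstable holonomies. Since $y\in W^u_{\mathrm{loc}}(w)$, the invariance relation for $H^u$ (the analogue of (H2)) gives the exact identity $\A_w^k=H^u_{f^ky,\,f^kw}\circ\A_y^k\circ H^u_{w,y}$. Both $d_X(w,y)$ and $d_X(f^ky,f^kw)$ are small: the former because $y$ is the local intersection point; the latter because $f^ky\in W^s_{\mathrm{loc}}(p)$ while $d_X(f^kw,p)=d_X(f^kw,f^kp)\le c\,\delta$, so $d_X(f^ky,f^kw)\le\mathrm{const}\cdot\delta$; moreover $f^ky$ and $f^kw$ lie on one unstable leaf and hence, once $\delta'$ is small, on a common local unstable manifold by the local product structure. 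Thus, by (H3) for the unstable holonomies, $H^u_{w,y}$ and $H^u_{f^ky,f^kw}$ are $C^1$-close to $\Id$, and Lemma~\ref{dlLO lemma} gives $|\A_w^k|_{C^1}\le c_2$ with $c_2$ independent of $z,n_1,n_2$. From here the argument coincides with that of Proposition~\ref{Cr bdd}, with $r_2$ replaced by $1$: using the $\delta'$-density of $\{f^jz:|j|\le m\}$, the cocycle equation, Lemma~\ref{dlLO lemma}, and the continuity of $x\mapsto|\A_x^n|_{C^1}$, one propagates the bound to $|\A_x^n|_{C^1}$ for all $x\in X$ and $n\in\Z$.

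The main obstacle is the second step. In contrast with Proposition~\ref{Cr bdd}, the distortion estimate \eqref{dist} cannot be used, since $\A_p^k$ is not controlled in $C^q$; the argument must be reorganized around the exact composition identity $\A_w^k=H^u_{f^ky,f^kw}\circ\A_y^k\circ H^u_{w,y}$ and the stability of $\donem$-boundedness under composition with near-identity maps. The technical care lies in verifying the closeness hypotheses needed to apply \eqref{compare 1} and the near-identity estimates, in checking that $f^ky$ and $f^kw$ really lie on a common local unstable manifold, and in tracking the uniformity of all the constants --- the holonomy constant in (H3), the $\donem$-bound on $\A_P$, and the constants from Lemma~\ref{dlLO lemma} --- in $p,y,w,k$.
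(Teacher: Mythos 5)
Your proposal is correct and follows the same overall scheme as the paper's: fix a dense orbit, apply the closing lemma to get $w,k,p,y$, transfer the $\donem$ bound from $\A_p^k$ to $\A_y^k$ via near-identity factors, transfer from $\A_y^k$ to $\A_w^k$, and propagate by density and the cocycle equation. Your first step is exactly the paper's: $\A_y^k=\A_p^k\circ\psi^{-1}$ with $\psi=(\A_y^k)^{-1}\circ\A_p^k$ controlled in $C^1$ by \eqref{Anx}.

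For the second step the paper only says ``a similar argument,'' and your fill-in is a valid one. Where a direct parallel would apply the unstable version of \eqref{Anx} at the pair $(f^kw,f^ky)$ for $k$ backward iterates, obtaining a single near-identity factor $\phi=\A_y^k\circ(\A_w^k)^{-1}$ with $\A_w^k=\phi^{-1}\circ\A_y^k$, you instead use the invariance identity (H2) in the form $\A_w^k=H^u_{f^ky,f^kw}\circ\A_y^k\circ H^u_{w,y}$ and bound both holonomy factors by (H3). Both require the same geometric verification that $f^ky$ and $f^kw$ lie on a common local unstable leaf and are $O(\delta)$-close, which you correctly check. Either version only needs composition on the left and right by diffeomorphisms that are $C^1$-close to the identity, which is precisely why the $\dqm$ control on $g$ that Lemma~\ref{distance} demanded in Proposition~\ref{Cr bdd} is not needed here and why there is no loss of regularity. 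The propagation step and the reduction to bounding first derivatives (the $C^0$ part being trivial by compactness of $\M$) match the paper's.

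One small omission: it would be worth stating explicitly, as the paper does, that $\A_X$ is automatically bounded in $|\cdot|_{C^0}$ so only the derivative bound is at issue; your argument is written as if this were understood, and it is harmless, but it should be said.
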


Since $\A_X$ is bounded in $|\cdot |_{C^0}$, it suffices to show that the first derivatives
are uniformly bounded.
As in the previous proof, we consider the points $z, w, p$, and $y= W^s_{loc}(p)\cap W^u_{loc}(w)$.
Then by \eqref{Anx} there exists a constant $c$ independent of $p$ and $y$
such that 
$$
  d_{C^1}((\A^k_p)^{-1}\circ \A^k_y, \,\Id)\le c \quad\text{and hence}\quad
  \|D_t \left( (\A^k_p)^{-1}\circ \A^k_y\right)^{\pm 1}\| \le c'
$$
for some $c'$. Since $\A^k_y= \A^k_p \circ ((\A^k_p)^{-1}\circ \A^k_y)$
and the set $\A_P$ is bounded in $|\cdot |_{C^1}$, we conclude that 
$\|D_t(\A_y^k)^{\pm 1}\|$ is bounded uniformly in $t$, $k$, and $y$, as above,
 and thus the set $\{\A_y^k\}$ is bounded in $|\cdot |_{C^1}$.
Then a similar argument shows that the set  $\{\A_w^k\}$ is also bounded in $|\cdot |_{C^1}$.
Therefore, there exists a constant $c''$ such that whenever 
$\dist(f^{n_1}z, f^{n_2}z)< \delta'$ with $k=n_2-n_1>0$,
we have $|\A^k_{f^{n_1}z}|_{C^1}<c''$.

We take $m\in \N$ such that the set  $\{f^j z:\; |j|\le m\}$ is 
$\delta'$-dense in $X$ and let
$$
c_m =\max \,\{\,|\A^m_z|_{C^1}:\; |j|\le m\}.
$$
Since for any $n>m$
there exists $j$, $|j|\le m$, such that $d_X (f^n z, f^j z)\le \delta'$,\,
we have
$$
  \|D_t(\A^n_z)\| =
   \|D_{\A^{n-j}(t)}(\A^j_z) \circ D _t(\A^{n-j}_{f^jz})\|\le |\A^j_z|_{C^1} 
   \cdot |\A^{n-j}_{f^jz}|_{C^1}\le c_mc''
$$
and similarly for $\|D_t(\A^n_z)^{-1}\|$ and for  $n<-m$. 
Thus the set $\{ |\A^n_z |_{C^1}:\; n\in \Z\}$ is bounded, and 
it follows that $\A_X$ is bounded $|\cdot |_{C^1}$.
$\QED$
\vskip.2cm
This completes the proof of Theorem \ref{main 1}.


\section{Proof of Theorem  \ref{main 2}}

We recall that $q=1+\gamma$, where $0<\gamma <1$.  We fix $0<\a<\gamma$ and set  $r=1+\alpha$.
Since the periodic data set $\A_P$ is bounded in $\dqm$, 
 the cocycle $\A$ satisfies the conclusion of Proposition \ref{holonomies}, and in particular 
 has stable and unstable holonomies in $\drm$. Also, the set of values $\A_X$ is bounded 
in $\drm$ by Proposition \ref{Cr bdd}.
\vskip.1cm

As in Section \ref{slow},  we consider the linear cocycle $\B_{(x,t)}= D_t\A_x $ 
over the map $F(x,t)=(f(x), \A_x (t))$ on the vector bundle $\E$  over $X\times \M$ with fiber
$\E_{(x,t)} =T_t\M$.
\vskip.1cm

The Proof of Theorem \ref{main 2} is organized as follows. In Section \ref{measurable}
we construct  an everywhere defined  bounded measurable family $\hat \tau$ of inner products 
on the vector bundle $\E$ invariant under the cocycle $\B$. 
In Section \ref{along fibers} we show that $\hat \tau$ is H\"older continuous along each 
fiber $\M_x$ in $X\times \M$.  In Section \ref{essential inv}
we establish essential invariance of $\hat \tau$  under the  holonomies 
of the linear cocycle $\B$ along the stable and unstable sets in the skew product.
In Section \ref{Holder tau}, we consider a natural invariant measure $\hat \mu$
 for the skew product that projects to the measure of maximal entropy $\mu$ for $X$.
Then using the above essential invariance with respect to $\hat \mu$ we show that 
$\hat \tau_x$, the restriction of $\hat \tau$ to $\M_x$,  is $\mu$-essentially invariant
under the stable and unstable holonomies of the cocycle $\A$, as a Riemannian metric on 
the whole fiber $\M_x$. This yields essential H\"older continuity of $\hat \tau_x$ along
the stable and unstable leaves in $X$, and hence global H\"older continuity.

The arguments in Sections \ref{measurable} and \ref{essential inv} are similar to those
in \cite{S,KS10,KS13} for conformality of linear cocycles over hyperbolic and partially 
hyperbolic systems. However, the skew product $F$ is not a volume preserving
 smooth partially hyperbolic system as in \cite{KS13}, so we adapt arguments
 to our case. More importantly, $F$ is not accessible and so we need arguments in 
 Section \ref{along fibers} to show regularity along the ``center" $\M$ 
 and in Section \ref{Holder tau} to deduce global regularity.

\subsection {An invariant  bounded measurable family $\hat \tau$ of inner products on $\E$.} \label{measurable} $\;$ \\
In this section, it suffices to assume that the cocycle $\A$ is bounded in $|\cdot |_{C^1}$,
and hence $\|\B_{(x,t)}^n\|$ is uniformly bounded in $(x,t)\in X\times \M$ and $n\in \Z$.
\vskip.05cm

The space $\T^m$ of inner products on $\R^m$
identifies with the space of real symmetric positive definite $m\times m$ matrices, 
which is isomorphic to $GL(m,\R) /SO(m,\R)$. 
The group $GL(m,\R)$ acts transitively on $\T^m$
via $A[E] = A^T E \, A,$ where $A\in GL(m,\R)$ and $E \in \T^m.$ 
The space $\T^m$ is a Riemannian symmetric space 
of non-positive curvature when equipped with a certain $GL(m,\R)$-invariant 
metric \cite[Ch.\,XII, Theorem 1.2]{L}. 
Using the background Riemannian metric on $\E$, we can identify an inner product
with a symmetric linear operator. For each $(x,t)\in X\times \M$, 
we denote the space of inner products on $\E_{(x,t)}$ by $\T_{(x,t)}$, and so
we obtain a bundle $\T$ over $X\times \M$ with
fiber  $\T_{(x,t)}$. We equip the fibers of $\T$ with the Riemannian 
metric mentioned above.  We call a continuous (H\"older continuous, measurable) 
section of $\T$ a continuous  (H\"older continuous, measurable)
 Riemannian metric on $\E$. 
A  metric metric $\tau$ is called {\em bounded} 
if the distance between $\tau_{(x,t)}$ and $\tilde \tau_{(x,t)}$ is uniformly 
bounded on $X\times \M$ for a continuous metric $\tilde \tau$ 
on $\E$. 
For the linear cocycle $\B$, the pullback of an inner product
$\tau_{F(x,t)}$ on $\E_{F(x,t)}$ to $\E_{(x,t)}$ is given by
$$
\left( \B_{(x,t)}^* (\tau_{F(x,t)}) \right) (v_1, v_2)= 
\tau_{F(x,t)} \left (\B_z (v_1),\,(\B_z)v_2 \right) \quad\text{for } v_1, v_2 \in \E_{(x,t)}.
$$
We say that a metric  $\tau$ is $\B${\em -invariant}\,
if $\B^\ast(\tau) = \tau$.

Let $\tau$ be a continuous metric on $\E$.
We consider the set 
$$
S(x,t)=\{\,(\B^n_{(x,t)})^*(\tau_{F^n (x,t)}): \;n\in\Z\,\}  \subset \T_{(x,t)}.
$$ 
Since the cocycle $\B$ is bounded, the sets $S(x,t)$ have 
uniformly bounded diameters.  Since the space $\T_{(x,t)}$ has 
non-positive curvature,  for every $(x,t)$ there exists a 
unique smallest closed ball containing $S(x,t)$ \cite[Ch.\,XI, Theorem 3.1]{L}.
We denote its center by $\hat \tau_{(x,t)}$. By the construction,
 the metric $\hat \tau$ is invariant under $\B$.

For any $k\ge 0$, the set 
\begin{equation}\label{Sk}
S^k(x,t)=\{\,(\B^n_{(x,t)})^*(\tau_{F^n (x,t)}): \;|n| \le k \}
\end{equation}
depends continuously on $(x,t)$ in Hausdorff distance, and so does 
the center $\tau^k_{(x,t)}$ of the smallest ball containing $S^k(x,t)$
by Lemma \ref{centers} below.
Since $S^k(x,t) \to S(x,t)$  in Hausdorff distance as $k \to \infty$ for any $(x,t)$,
the metric $\hat \tau$ is the pointwise limit of continuous 
metrics $\tau^k_{(x,t)}$, 
and hence $\hat \tau$ is Borel measurable.

\begin{lemma}\label{centers}
Let $S_1$ and $S_2$ be bounded sets in $\T^m$ and let  $B(c_1, r_1)$ and $B(c_2, r_2)$ 
be the smallest 
closed balls  containing $S_1$ and $S_2$, respectively. 
Then $\,d_{\T^m}(c_1,c_2)  \le \,d_\text{H} (S_1,S_2)$, where $d_H$ is the Hausdorff distance.

\end{lemma}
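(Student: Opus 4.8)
\medskip
\noindent\textbf{Proof plan.} Recall that $\T^m=GL(m,\R)/SO(m,\R)$ with its $GL(m,\R)$--invariant metric $d_{\T^m}$ is a complete, simply connected Riemannian manifold of non-positive curvature, hence a complete CAT$(0)$ (NPC) space, and that every bounded subset of it has a unique smallest enclosing ball (as already used above). So it suffices to prove the statement in an arbitrary complete NPC space $(M,d)$. For a bounded set $S\subset M$ put $f_S(x)=\sup_{s\in S}d(x,s)$ and $G_S(x)=f_S(x)^2$. The argument rests on two facts. First, \emph{strong convexity}: for each fixed $s$ the function $x\mapsto d(x,s)^2$ satisfies the NPC comparison $d(m,s)^2\le\tfrac12 d(a,s)^2+\tfrac12 d(b,s)^2-\tfrac14 d(a,b)^2$ at the midpoint $m$ of a geodesic $[a,b]$, and taking the supremum over $s$ preserves this, so $G_S(m)\le\tfrac12 G_S(a)+\tfrac12 G_S(b)-\tfrac14 d(a,b)^2$. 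Hence $G_S$ has a unique minimizer, which is exactly the center $c_S$ of the smallest enclosing ball of $S$, the minimum value being $r_S^2$ with $r_S$ its radius; iterating the midpoint inequality along the geodesic from $c_S$ to an arbitrary $x$ yields the quadratic growth (stability) estimate
$$
G_S(x)\ \ge\ r_S^{2}+d(x,c_S)^2\quad\text{for all }x\in M,\qquad\text{equivalently}\qquad S\subseteq \overline B(z,\rho)\ \Longrightarrow\ d(z,c_S)^2\le \rho^2-r_S^{2}.
$$

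\noindent Second, \emph{Hausdorff stability of $f_S$}: with $\delta:=d_{\mathrm H}(S_1,S_2)$ one has $|f_{S_1}(x)-f_{S_2}(x)|\le\delta$ for every $x\in M$. Indeed, for $p\in S_1$ pick $q\in S_2$ with $d(p,q)\le\delta$, so $d(x,p)\le d(x,q)+\delta\le f_{S_2}(x)+\delta$; take the supremum over $p$ and then interchange the roles of $S_1$ and $S_2$. Evaluating this at $c_1$ and at $c_2$ and using $r_i=\min f_{S_i}=f_{S_i}(c_i)$ gives $|r_1-r_2|\le\delta$. Moreover $S_2\subseteq\overline N_\delta(S_1)\subseteq\overline N_\delta\!\big(\overline B(c_1,r_1)\big)=\overline B(c_1,r_1+\delta)$ and, symmetrically, $S_1\subseteq\overline B(c_2,r_2+\delta)$, so the stability estimate from the first fact already gives the preliminary bounds $d(c_1,c_2)^2\le (r_1+\delta)^2-r_2^{2}$ and $d(c_1,c_2)^2\le (r_2+\delta)^2-r_1^{2}$.

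\noindent The final step, and the main obstacle, is to sharpen these to $d_{\T^m}(c_1,c_2)\le\delta$. The point to exploit is that $c_i$ is the center of the \emph{smallest} enclosing ball, not merely of some enclosing ball: $c_i$ is the barycenter of a probability measure supported on the ``extreme set'' $F_i=\{\,q\in\overline{S_i}:d(q,c_i)=r_i\,\}$ (for non-compact $S_i$ one works with $(r_i-\varepsilon)$--extreme points and lets $\varepsilon\to0$). Thus $F_i$ lies simultaneously on the sphere $\partial B(c_i,r_i)$, inside $\overline B(c_{3-i},r_{3-i}+\delta)$ (by the inclusions above), and within $\delta$ of $S_{3-i}$. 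I would feed a measurable selection $q\mapsto\pi_i(q)\in\overline{S_{3-i}}$ with $d(q,\pi_i(q))\le\delta$ into these constraints, estimate the ``$\langle q-c_i,\ c_{3-i}-c_i\rangle$''--type quantities coming from the ball constraint, and average them against the barycentric measure, using that the barycenter of $q-c_i$ vanishes, so as to force $c_1$ and $c_2$ within $\delta$ of each other. The obstacle is precisely this: the crude inclusions in enlarged balls are too lossy (they only yield $d(c_1,c_2)^2\le\delta(r_1+r_2+\delta)$), and the clean constant $\delta$ must be extracted from the finer barycentric characterization of $c_1,c_2$ together with the pointwise $\delta$--matching of the two sets, rather than from the inclusions alone.
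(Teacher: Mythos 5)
Your proposal does not reach the stated inequality: what you actually establish is the pair of bounds $d_{\T^m}(c_1,c_2)^2\le (r_i+\delta)^2-r_j^2$ and hence $d_{\T^m}(c_1,c_2)^2\le\delta(r_1+r_2+\delta)$, where $\delta=d_{\text{H}}(S_1,S_2)$, while the decisive sharpening to $d_{\T^m}(c_1,c_2)\le\delta$ is only announced as a plan (a barycentric characterization of the circumcenter plus averaging over near-extreme points) that is never carried out. So, as written, there is a genuine gap at exactly the step you flag as the main obstacle. For comparison, the paper's own argument is to deduce the inclusions $B(c_2,r_2)\subset B(c_1,r_1+\e)$ and $B(c_1,r_1)\subset B(c_2,r_2+\e)$ ``by minimality'' and then exhibit a point of one ball outside the other; but minimality of the enclosing ball of $S_2$ only says its radius is smallest, not that it is contained in every ball containing $S_2$, and in a Hadamard space the inclusion $B(c_2,r_2)\subset B(c_1,r_1+\e)$ is equivalent to $d(c_1,c_2)+r_2\le r_1+\e$, i.e. essentially to the conclusion, so that step is unjustified.

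In fact the missing step cannot be supplied: the constant-$1$ inequality fails whenever $\T^m$ contains a $2$-flat, i.e. for $m\ge 2$ (it does hold for $m=1$), and your H\"older-$1/2$ bound is sharp. A counterexample already in the Euclidean plane, which sits in $\T^m$ as a convex flat (and circumcenters of sets lying in a convex flat remain in it, by nearest-point projection): for small $\delta>0$ set $D=\sqrt\delta$, $a=\sqrt\delta/2$, $h=\sqrt{1-a^2}$, $c_1=(0,0)$, $c_2=(D,0)$; let $S_1$ consist of $(-a,\pm h)$, $(1,0)$, $(D-1,0)$ and the radial projections of $(D+a,\pm h)$ to the unit circle about $c_1$, and let $S_2$ consist of $(D+a,\pm h)$, $(D-1,0)$, $(1,0)$ and the projections of $(-a,\pm h)$ to the unit circle about $c_2$. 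Each set lies in the corresponding closed unit ball and contains three points of the corresponding unit circle whose convex hull contains the center, so the smallest enclosing balls are exactly $B(c_1,1)$ and $B(c_2,1)$; on the other hand every point of either set is within $\sqrt{1+2\delta}-1<\delta$ of the other set, so $d_{\text{H}}(S_1,S_2)<\delta$ while $d(c_1,c_2)=\sqrt\delta$. Thus the correct general statement is the one you proved (your quadratic-growth and Hausdorff-stability steps are valid in any complete CAT$(0)$ space), namely $d(c_1,c_2)\le\sqrt{\delta\,(r_1+r_2+\delta)}$; for the way Lemma \ref{centers} is used in Sections \ref{measurable} and \ref{along fibers}, where the radii involved are uniformly bounded, this substitute suffices, at the cost of replacing the fiberwise H\"older exponent $\a$ by $\a/2$ — but it does not rescue the lemma as stated, nor the constant-$1$ claim your final step was aiming for.
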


\begin{proof}
Suppose $d_\text{H} (S_1,S_2) < \e$. Then,
since $S_1\subset B(c_1, r_1)$, we have
$S_2\subset B(c_1,r_1+\e)$,
and hence $B(c_2, r_2) \subset B(c_1,r_1+\e)$ by minimality. 
Similarly, $B(c_1, r_1) \subset B(c_2,r_2+\e)$.

If $d_{\T^m} (c_1,c_2) > \e$, we obtain a contradiction. Indeed, suppose $r_2\ge r_1$. 
Any two points in $\T^m$ lie on a unique geodesic, which is isometric to $\R$.
Therefore there exists a point $c$ on the geodesic 
through $c_1$ and $c_2$ with 
$$
d_{\T^m}(c,c_2)=r_2 \;\text{ and }\; d_{\T^m}(c,c_1)=d_{\T^m}(c,c_2)+d_{\T^m}(c_2, c_1) >r_2+\e\ge r_1+\e.
$$
This point is in $B(c_2,r_2)$, but not in $B(c_1,r_1+\e)$. Thus $\dist (c_1,c_2) \le \e$.
\end{proof}


\subsection{ H\"older continuity of $\hat \tau$ along the fibers $\M_x$ in $X\times \M$.} \label{along fibers}
$\;$ We recall that by Proposition \ref{Cr bdd} the value set $\A_X$ is bounded in $\drm$, where $r=1+\a$.

We start with an $\alpha$-H\"older continuous metric $\tau$ on $\V$ and obtain 
a bounded Borel measurable metric $\hat \tau$, as in the previous section.
Below we show that $\hat \tau$ is H\"older along the fibers.
We denote by $\hat \tau_x$ the restriction of the metric $\hat \tau$ to the
fiber $\M_x$.

\begin{proposition}  
For each $x \in X$ the metric $\hat \tau_x$ is $\a$-H\"older continuous on $\M_x$, 
more precisely, there exists a constant $c$ such that 
\begin{equation}\label{tau H}
d_{\T} (\hat \tau(x, t), \hat \tau(x, t'))\le c\, d_\M (t, t')^\alpha 
\quad\text{for all } x\in X \text{ and } t, t' \in \M.
\end{equation}

\end{proposition}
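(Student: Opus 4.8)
The metric $\hat\tau_x$ is built fiberwise on $\M_x$ as the center of the smallest ball enclosing $S(x,t)=\{(\B^n_{(x,t)})^*(\tau_{F^n(x,t)}):n\in\Z\}$ inside the nonpositively curved space $\T_{(x,t)}$. To get H\"older continuity in $t$, I would compare the enclosing balls at two nearby points $(x,t)$ and $(x,t')$ with $d_\M(t,t')$ small, invoking Lemma~\ref{centers}: it suffices to bound the Hausdorff distance $d_{\mathrm H}\big(S(x,t),S(x,t')\big)$ (after transporting both sets into a common fiber via the background metric's parallel transport along the short geodesic from $t$ to $t'$) by $c\,d_\M(t,t')^\alpha$. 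So the whole problem reduces to: for every $n\in\Z$, the pulled-back inner product $(\B^n_{(x,t)})^*(\tau_{F^n(x,t)})$ depends $\alpha$-H\"older-continuously on the footpoint $t$, with a H\"older constant that is \emph{uniform in $n$}.

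**Key steps.** First, note $\B^n_{(x,t)}=D_t\A^n_x$ and $F^n(x,t)=(f^nx,\A^n_x(t))$, so the $n$-th term of $S(x,t)$ is the symmetric operator associated to $(v_1,v_2)\mapsto \tau_{(f^nx,\,\A^n_x(t))}\big(D_t\A^n_x\,v_1,\,D_t\A^n_x\,v_2\big)$. Differentiating/comparing this at $t$ and $t'$ produces three contributions: (a) the variation of $\tau$ over the manifold $\M$, controlled by the assumed $\alpha$-H\"older continuity of $\tau$ applied to the points $\A^n_x(t),\A^n_x(t')$, whose distance is at most $\|\A^n_x\|_{C^1}\,d_\M(t,t')\le L\,d_\M(t,t')$ with $L$ the uniform $C^1$-bound on $\A_X$ from Proposition~\ref{Cr bdd}; (b) the variation of the linear map $D_t\A^n_x$ itself, i.e. $\|D_t\A^n_x-D_{t'}\A^n_x\|$, which since $\A^n_x$ is bounded in $C^{1+\alpha}$ uniformly in $n$ (again Proposition~\ref{Cr bdd}) is $\le L'\,d_\M(t,t')^\alpha$; (c) cross terms, all bounded by products of the uniform $C^1$-norm $L$ and the two estimates above. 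Since $\tau$ is a fixed continuous metric, $\|\tau_{(\cdot,\cdot)}\|$ is bounded; combining (a)–(c) gives
$$
d_{\T}\Big((\B^n_{(x,t)})^*\tau_{F^n(x,t)},\ (\B^n_{(x,t')})^*\tau_{F^n(x,t')}\Big)\le C\,d_\M(t,t')^\alpha
$$
with $C=C(L,L',\|\tau\|,\alpha)$ \emph{independent of $n$ and $x$} — here one must also use that on a ball of bounded $\T^m$-diameter the Riemannian distance $d_{\T}$ is comparable to the Euclidean distance of the corresponding symmetric matrices, so the elementary matrix estimates of (a)–(c) transfer to $d_{\T}$. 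Taking the supremum over $n$ gives $d_{\mathrm H}(S(x,t),S(x,t'))\le C\,d_\M(t,t')^\alpha$, and Lemma~\ref{centers} finishes the proof with $c=C$.

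**Main obstacle.** The delicate point is the \emph{uniformity in $n$} of the constant in step (b): a priori $\|\A^n_x\|_{C^{1+\alpha}}$ could grow with $n$, and then the H\"older constant of the $n$-th term of $S(x,t)$ would blow up, destroying the Hausdorff bound. This is exactly where Proposition~\ref{Cr bdd} (boundedness of $\A_X$ in $\drm$, $r=1+\alpha$) is essential — it is what upgrades the sub-exponential growth of Section~\ref{slow} to a genuine uniform bound, and without it the fiberwise regularity of $\hat\tau$ would fail. A secondary technical nuisance is making steps (a)–(c) rigorous on the manifold: one works in the trivialization by parallel transport (the $\e_0$-scale used in Section~\ref{Crdist}), keeps $d_\M(t,t')<\e_0$, and writes $D_t\A^n_x$ in these local frames so that "differences of derivatives" literally make sense; the $\alpha$-H\"older control of these frame-dependent differences is precisely encoded in the $C^{1+\alpha}$-norm, so no new input is needed beyond bookkeeping.
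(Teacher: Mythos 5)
Your proof follows the paper's argument in all essentials: both reduce to showing that each term $(\B^n_{(x,t)})^*(\tau_{F^n(x,t)})$ is $\alpha$-H\"older in $t$ with a constant uniform in $n$ (combining the $\alpha$-H\"older continuity of $\tau$, the uniform $C^{1+\alpha}$ bound on $\A_X$ from Proposition~\ref{Cr bdd}, and the comparability of $d_\T$ to matrix norms on bounded sets), then bound the Hausdorff distance by the supremum of these, and close with Lemma~\ref{centers}. The paper organizes the pointwise comparison a bit differently — a two-term triangle inequality exploiting that the pullback action is an isometry of $\T$, plus the explicit Lemma~\ref{A*} — where you instead split into the (a)/(b)/(c) matrix-level contributions, but this is a cosmetic variant of the same computation.
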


\begin{proof}
We use the following lemma. It was proven in \cite{KS10} for conformal structures
rather than inner products, but the proof works without significant modifications.

\begin{lemma}\label{A*} (cf. \cite[Lemma 4.5]{KS10})
Let $\tau$ be an inner product on $\R^m$ and $B$ be a linear
transformation of $\,\R^m$ sufficiently close to
the identity. Then 
  $$
     d_\T(\tau, B(\tau)) \le c_1(\tau)\cdot\|B-\Id\,\|,
  $$
where the function $c_1(\tau)$ is bounded on compact sets in $\T^m$. 
\end{lemma}

In the chain of inequalities below, we use the following: 
the pullback action is an isometry; 
the metric $\tau$ is $\a$-H\"older continuous and in particular bounded; 
Lemma \ref{A*}; 
and the fact that since $\A_X$ is bounded in $|\cdot|_{C^{1+\a}}$,
the norm $\|(\B^n_{(x,t)})^{\pm 1}\|$ is bounded and there is a constant $c_2$ such that
$$
\|  \B^n_{(x,t)} - \B^n_{(x,t')} \| \le c_2\, d_\M (t, t')^\alpha.
$$
For each $n\in \Z$ we have 
$$
\begin{aligned}
& d_{\T}\left((\B^n_{(x,t)})^*(\tau_{F^n (x,t)}), (\B^n_{(x,t')})^*(\tau_{F^n (x,t')})\right) \le \\
 & \le d_{\T} 
 \left( (\B^n_{(x,t)})^*(\tau_{F^n (x,t)}), (\B^n_{(x,t)})^*(\tau_{F^n (x,t')})\right) +\\
&\hskip5cm + d_{\T} \left( (\B^n_{(x,t)})^*(\tau_{F^n (x,t')}), (\B^n_{(x,t')})^*(\tau_{F^n (x,t')})\right) \le\\
 &\le d_{\T} \left(\tau_{F^n (x,t)}, \tau_{F^n (x,t')} \right) + 
 d_{\T} \left( \tau_{F^n (x,t')}, (\B^n_{(x,t)})^{-1} \B^n_{(x,t')})^*(\tau_{F^n (x,t')})\right) \\
&\le c_3 \,d_\M (t, t')^\alpha  +
 c_1 (\tau_{F^n (x,t')}) \| (\B^n_{(x,t)})^{-1} \B^n_{(x,t')} -\Id\|  \le \\
 &\le c_3\, d_\M (t, t')^\alpha  + c_4\| (\B^n_{(x,t)})^{-1}  \| \cdot \|  \B^n_{(x,t)} - \B^n_{(x,t')} \| \le \\
 & \le c_3\, d_\M (t, t')^\alpha  + c_4c_5 c_2 \, d_\M (t, t')^\alpha = c_6 \, d_\M (t, t')^\alpha.
 \end{aligned} 
$$
Thus for each $n$,
$$
  d_{\T} \left( (\B^n_{(x,t)})^*(\tau_{F^n (x,t)}), (\B^n_{(x,t')})^*(\tau_{F^n (x,t')})\right)
  \le c\, d_\M (t, t')^\alpha \quad\text{for all }t,t'\in \M,
$$
where the constant $c=c_6$ is independent of $x$ and $n$.
It follows immediately that the Hausdorff distance between the sets 
$S^k(x, t)$ and  $S^k(x, t')$ given by \eqref{Sk} satisfies
$$
d_\text{H} (S^k(x, t), S^k(x, t'))\le c\, d_\M (t, t')^\alpha 
 \quad\text{for all } x\in X \text{ and } t, t' \in \M.
$$
Hence by Lemma \ref{centers} the center $\tau^k_{(x,t)}$ of the smallest closed ball 
containing $S^k(x,t)$ also satisfies
$$
d_{\T}(\tau^k_{(x,t)}, \tau^k_{(x,t')} )\le  c\, d_\M (t, t')^\alpha \quad\text{for all }
x,k, t, t'.
$$
Passing to the limit as $k\to\infty$, we obtain \eqref{tau H}.
\end{proof}


\subsection{Stable and unstable sets for $F$} 
We consider the map $F(x,t)=(f(x), \A_x (t))$ of the set $X\times \M$.
While it is not partially hyperbolic 
in the classical sense, we can define the stable sets $\tilde W^s$ for $F$ 
using the  stable holonomies $H^s_{x,y}$ given by Proposition \ref{holonomies}.
The unstable sets $\tilde W^u$ are defined similarly.
For any $(x,t)\in X\times \M$, we set
$$
  \tilde W^s(x,t)=\{(y,t')\in X\times \M: \; y\in W^s(x), \;\, t'= H^s_{x,y}(t)\}.
$$
We will only use the following contraction property for these sets:
$$
d_{X\times \M} (F^n(x,t), F^n(y,t')) \to 0 \quad\text{as }n\to \infty 
$$
for any $(x,t)\in X\times \M$ and  $(y,t')\in \tilde W^s(x,t)$. It holds since
$$
\begin{aligned}
& d_{X\times \M} (F^n(x,t), F^n(y,t')) =d_{X\times \M}  \left((f^n x, \A^n_x(t)), (f^n y, \A^n_y(t'))\right)=\\
&=d_X (f^nx, f^ny)+d_\M (\A^n_x(t),\A^n_y(t')) \to 0 \quad\text{as }n\to \infty
\end{aligned}
$$
as $d_X (f^nx, f^ny) \to 0$ and
$$
 \A^n_y(t')= H^s_{f^nx ,\,f^ny} \circ \A^n_x \circ H^s_{y,x}(t')= H^s_{f^nx ,\,f^ny} \circ \A^n_x (t),
$$
where $d_{C^r}(H^s_{f^nx ,\,f^ny},\Id) \le c\, d_X(f^nx ,f^ny)^{\beta\rho} \to 0\,$ 
by (H3) in Proposition \ref{holonomies}.
\vskip.1cm

\subsection{Essential invariance  of $\hat \tau$ under the holonomies.} \label{essential inv}
For convenience, in the remaining two sections we will use the push forward of an inner product 
by a linear map, which is defined as the pull-back by its inverse: $L_* =(L^{-1})^*$.

\vskip.1cm

First we show that $\hat \tau$ is essentially invariant under the derivatives of $H^s$ 
along the stable sets of $F$ in $X\times \M$. These derivatives can be interpreted as
stable holonomies of the cocycle $\B$. The statement and proof for the 
unstable holonomies are similar.

\begin{proposition} \label{tau H invariant} 
Let $\nu$ be an ergodic $F$-invariant probability measure on $X\times \M$.
If $\tau$ is a $\nu$-measurable  $\B$-invariant metric on $\V$, then  
$\tau$ is essentially $H^s$-invariant, more precisely, there exists an  $F$-invariant  
set $E\subset X\times \M$  with $\nu(E)=1$ such that 
 $$
\hat \tau(y,t') = (D_tH^s_{x,y})_* (\hat \tau(x,t))  \quad \text{for all }(x,t), (y,t') \in E 
\;\text{ with }(y,t') \in \tw^s_{loc}(x,t).
$$ 
\end{proposition}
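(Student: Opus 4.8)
The plan is to use the classical argument for essential holonomy-invariance of an invariant metric for a linear cocycle, adapted to the skew-product setting where the "stable sets" $\tw^s$ are defined via the holonomies $H^s_{x,y}$ rather than coming from a genuine partially hyperbolic structure. First I would fix an ergodic $F$-invariant probability measure $\nu$ and a $\nu$-measurable $\B$-invariant metric $\tau$ on $\V$; passing to $\hat\tau$ (built as in Section \ref{measurable}) does not change the situation since $\hat\tau$ is also $\B$-invariant, so I will just work with a generic $\B$-invariant $\nu$-measurable $\tau$. The key geometric fact is that the derivative cocycle along $\tw^s$ \emph{is} a stable holonomy for $\B$: differentiating property (H2) of Proposition \ref{holonomies}, $H^s_{x,y}= (\A^n_y)^{-1}\circ H^s_{f^nx,f^ny}\circ \A^n_x$, one gets for $(y,t')\in\tw^s_{loc}(x,t)$ the cocycle-type identity
\[
D_tH^s_{x,y}= (\B^n_{(y,t')})^{-1}\circ D_{\A^n_x(t)}H^s_{f^nx,f^ny}\circ \B^n_{(x,t)},
\]
and by (H3) together with the $C^r$-boundedness of $\A_X$ (Proposition \ref{Cr bdd}), the middle factor $D_{\A^n_x(t)}H^s_{f^nx,f^ny}\to\Id$ as $n\to\infty$. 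This is precisely the contraction one needs: the derivative of $H^s$ along stable sets is the limit of conjugates of the identity by $\B^n$, exactly the structure exploited in \cite{S,KS10,KS13}.

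Next I would carry out the standard Lusin/Luzin-type approximation-plus-recurrence argument. By Luzin's theorem there is a compact set $\Gamma\subset X\times\M$ with $\nu(\Gamma)$ close to $1$ on which $\tau$ is (uniformly) continuous and bounded in $\T$. By the Poincaré recurrence theorem (and the contraction property of $\tw^s$ established in Section \ref{essential inv}, namely $d_{X\times\M}(F^n(x,t),F^n(y,t'))\to 0$), for $\nu$-a.e.\ pair of points $(x,t)$ and $(y,t')\in\tw^s_{loc}(x,t)$ one can find a sequence $n_j\to\infty$ with $F^{n_j}(x,t)\in\Gamma$ and $F^{n_j}(y,t')\in\Gamma$ and the two images converging to the same point of $\Gamma$. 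Using $\B$-invariance of $\tau$, write $\tau(y,t')=(\B^{n_j}_{(y,t')})^*\tau(F^{n_j}(y,t'))$ and $\tau(x,t)=(\B^{n_j}_{(x,t)})^*\tau(F^{n_j}(x,t))$. Then compute, using the isometry property of the pull-back on $\T$,
\[
d_\T\big((D_tH^s_{x,y})_*\tau(x,t),\ \tau(y,t')\big)
= d_\T\big(D_{\A^{n_j}_x(t)}H^s_{f^{n_j}x,f^{n_j}y})_*\tau(F^{n_j}(x,t)),\ \tau(F^{n_j}(y,t'))\big).
\]
The right-hand side is bounded by $d_\T(\tau(F^{n_j}(x,t)),\tau(F^{n_j}(y,t')))$ (which goes to $0$ by uniform continuity of $\tau$ on $\Gamma$ and convergence of the two image points) plus a term controlled by $\|D_{\A^{n_j}_x(t)}H^s_{f^{n_j}x,f^{n_j}y}-\Id\|$ via Lemma \ref{A*} and the uniform bound on $\tau$ over $\Gamma$; both terms tend to $0$. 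Hence $\tau(y,t')=(D_tH^s_{x,y})_*\tau(x,t)$ for $\nu$-a.e.\ such pair. Finally I would use ergodicity of $\nu$ and the cocycle identity (H2), together with the standard fact that a full-measure set of "good" points can be saturated along stable sets, to produce an $F$-invariant full-measure set $E$ on which the identity holds for \emph{all} pairs $(x,t),(y,t')\in E$ with $(y,t')\in\tw^s_{loc}(x,t)$.

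The main obstacle I anticipate is the measurability and saturation bookkeeping in the last step: one must pass from an a.e.\ statement about pairs to an \emph{everywhere-on-$E$} statement about pairs with $E$ an honest $F$-invariant Borel set. The clean way is to invoke a measurable version of the stable manifold theorem / Fubini argument for the partition of $X\times\M$ into stable sets $\tw^s$ — but since $F$ is not volume-preserving and not accessible, I would instead argue directly: the stable sets of $F$ are subordinate to the measurable partition coming from $W^s$ in the base via the (continuous, by (H3)) holonomies, so the conditional measures and the recurrence argument transfer from $X$ to $X\times\M$. A secondary technical point, flagged in the "cf." to \cite[Lemma 4.5]{KS10}, is that Lemma \ref{A*} is stated for $B$ close to $\Id$, so one needs the convergence $D_{\A^{n_j}_x(t)}H^s_{f^{n_j}x,f^{n_j}y}\to\Id$ to kick in before applying it — which is exactly what (H3) and boundedness of $\A_X$ guarantee, uniformly, once $n_j$ is large.
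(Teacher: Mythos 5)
Your core computation is exactly the paper's: differentiate (H2) to write $D_tH^s_{x,y}$ as a conjugate of $D_{\A^n_x(t)}H^s_{f^nx,f^ny}$ by the linear cocycle $\B^n$, use $\B$-invariance of $\tau$ and the fact that push-forward acts by isometries on $\T$, split the distance into $d(\tau(z'_n),\tau(z_n))$ plus a Lemma~\ref{A*} term, and let both go to zero along a recurrence sequence using (H3) and the contraction along $\tw^s$. So far this matches the paper step for step.

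The genuine gap is in the step you yourself flag as the ``main obstacle'': how to obtain an $F$-invariant full-measure Borel set $E$ on which the identity holds for \emph{all} pairs, not just for some ill-specified ``$\nu$-a.e.\ pair''. Your appeal to Poincar\'e recurrence does not, by itself, give simultaneous returns of both $z$ and $z'$ to the Lusin set $\Gamma$: Poincar\'e recurrence is a statement about a.e.\ point of $\Gamma$ returning to $\Gamma$, not about two points on the same stable set hitting $\Gamma$ at common times, and $\nu$ is a measure on $X\times\M$, not on pairs, so ``$\nu$-a.e.\ pair'' is not well defined without further work. Likewise, the proposed escape via a measurable stable manifold theorem or Fubini along the partition into $\tw^s$-sets would require constructing conditional measures along those sets, which is extra machinery the paper avoids. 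The paper's device is simple and worth internalizing: choose the Lusin set $S$ with $\nu(S)>1/2$, and define $E$ to be the set of points whose Birkhoff asymptotic frequency of visits to $S$ equals $\nu(S)$. By the Birkhoff ergodic theorem $\nu(E)=1$, and $E$ is automatically $F$-invariant because the asymptotic frequency is an $F$-invariant function. Crucially, since each of $z,z'\in E$ visits $S$ with density $>1/2$, the two visit-time sets have positive upper density of common times, so there is a sequence $n_i\to\infty$ with $z_{n_i},z'_{n_i}\in S$ simultaneously. This single observation resolves, in one stroke, full measure, $F$-invariance, and simultaneous recurrence of arbitrary pairs in $E$, which is exactly what your sketch leaves open.
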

\begin{proof}

To simplify the notations, we write $\tau$ for $\hat \tau$ and $d$ for $d_\T$, and
for  $y\in W_{loc}^s(x)$, $t\in \M_x$, and $t'=H^s_{x,y}(t)\in \M_y$, we set
$$
z=(x,t), \;\; z'=(y,t') \;\text{ so } z'\in \tilde W_{loc}(z),\;\; z_n=F^n (z),  \; \text{ and }\; z'_n=F^n(z').
$$
\vskip.2cm

Since  $\tau$ is $\nu$-measurable, by Lusin's Theorem 
 there exists a compact set $S\subset X\times \M$ 
with $\nu(S)>1/2$ so that $\tau$ is uniformly continuous and 
hence bounded on $S$.
Let $E$ be the set of points in $X\times \M$ for which the asymptotic frequency 
of  visiting $S$ equals $\nu(S)>1/2$. By Birkhoff Ergodic Theorem, $\nu(E)=1$.
\vskip.05cm

Suppose that  both $z$ and $z'$ are in $E$. We will show that 
$$
d\left( \tau(z'),\, (D_tH^s_{xy})_* (\tau(z)) \right) = 0, 
\;\text{ that is, }\; \tau(z')= (D_tH^s_{xy})_* (\tau(z)).
$$
Property (H2) of the holonomies, 
$H^s_{x,y}= (\A^n_y)^{-1}\circ H^s_{f^nx ,\,f^ny} \circ \A^n_x,\;$ 
implies
$$
\begin{aligned}
  & D_t H^s_{x,y}  =
  D_{ \A^n_y (t')}(\A^n_y)^{-1}\circ D_{\A^n_x (t)}\,H^s_{f^nx ,\,f^ny} \circ D_t\A^n_x \,= \\
  & = (\B^n_{(y,t')})^{-1} \circ D_{\A^n_x (t)}\,H^s_{f^nx ,\,f^ny} \circ \B^n_{(x,t)}
  = (\B^n_{z'})^{-1} \circ D_{\A^n_x (t)}\,H^s_{f^nx ,\,f^ny} \circ \B^n_{z}.
  \end{aligned}
$$
Since the metric $\tau$ is invariant under the cocycle $\B$,
and $\B$ induces an isometry on the space of inner products,  we have
\begin{equation}\label{long}
  \begin{aligned}
&d\left( \tau(z'),\, (D_tH^s_{xy})_* (\tau(z)) \right) = \\
&=d \left( \tau(z'),\, 
((\B^n_{z'})^{-1})_* \,(D_{\A^n_x (t)}\,H^s_{f^nx ,\,f^ny} \circ \B^n_{z})_*(\tau(z)) \right) =\\
&=d \left( (\B^n_{z'})_* (\tau(z')),\,  (D_{\A^n_x (t)}\,H^s_{f^nx ,\,f^ny})_*  
(\B^n_{z})_*(\tau(z)) \right) = \\
& = d \left( \tau(z_n'),  (D_{\A^n_x (t)}\,H^s_{f^nx ,\,f^ny})_* (\tau(z_n)) \right)=\\
&\le  d\,( \tau(z_n'), \tau(z_n)) + 
d \left( \tau(z_n), (D_{\A^n_x (t)}\,H^s_{f^nx ,\,f^ny})_* (\tau(z_n)) \right) . 
 \end{aligned}
\end{equation}
Since $z,z'\in E$, there exists a sequence $\{ n_i \}$ such that 
 both $z_{n_i}$ and $z'_{n_i}$ are in $S$ for each $i$.
Since $z'\in \tw^s_{loc}(z)$, $\,d_{X\times \M}(z_{n_i}, z'_{n_i})\to 0$ and hence 
$d(\tau(z_{n_i})), \tau(z'_{n_i}))\to 0$ by uniform continuity of $\tau$ on $S$. 
\vskip.05cm 

By property (H3) of holonomies, $d_{C^r}(H^s_{\,x,y},\Id) \leq c\,d_X (x,y)^{\beta \rho},$
 where $c$ is independent of $x$   and $y\in W^{s}_{\text{loc}}(x).$
 Hence 
 $$
   \| D_{\A^n_x (t)}\,H^s_{f^nx ,\,f^ny} -\Id \| \le \kappa\, d_{C^r}(H^s_{f^nx ,\,f^ny},\Id) \le
   \kappa  c\,d_X (x_n,y_n)^{\beta \rho} \to 0 \;\text{ as }n\to \infty.
 $$
Using Lemma \ref{A*} and boundedness of $\tau$ on $S$, we conclude that 
the last term in \eqref{long} tends to 0 as $n\to \infty$. Therefore,
$
d \left( \tau(z'),\, (D_tH^s_{xy})_* (\tau(z)) \right) = 0.
$
\end{proof}


\subsection{H\"older continuity of $\hat \tau$} \label{Holder tau}
Now we consider a particular measure on $X\times \M$.
Let $\mu $ be the measure of maximal entropy for $(X,f)$.
For each $x\in X$, we have the normalized volume $m_x$ on the fiber $\M_x$ 
 induced by the metric $\hat \tau$, which is H\"older continuous on $\M_x$.
We consider the measure $\hat \mu$ on $X\times \M$ given by
$\hat\mu = \int m_x \,d\mu(x)$, that is, for any Borel measurable set $S\subset X\times \M$,
$$
\hat\mu (S) = \int_X m_x (S\cap \M_x)\, d\mu(x).
$$
\vskip.05cm

Clearly, the measure $\hat \mu$ is $F$-invariant. While it is not necessarily ergodic, 
we can consider its ergodic components
and the corresponding partition $\xi$ of $X\times \M$. The Hopf argument yields that, up to a set 
of measure zero, every local stable set is contained in an element of $\xi$. 
So we can apply Proposition \ref{tau H invariant} to ergodic components of $\hat \mu$ 
we obtain the following.

\begin{corollary}  \label{z'}
There exists a set $\hat G\subset X \times \M$ with 
$\hat \mu(\hat G)=1$ such that 
$\hat \tau$ on $\hat G$ is invariant under the holonomies, that is, 
$$
\hat \tau(z') = (D_tH^s_{x,y})_* (\hat \tau(z))\, 
\text{ for all }z=(x,t)\in \hat G\, \text{ and all } z'=(y,t') \in \hat G \cap \tw^s_{loc}(z). 
$$
\end{corollary}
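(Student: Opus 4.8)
The plan is to deduce the statement from Proposition~\ref{tau H invariant} by decomposing $\hat\mu$ into ergodic components and gluing the invariance sets produced for each component by means of the Hopf observation recorded just before the statement. First I would write $\hat\mu=\int_\Omega\hat\mu_\omega\,d\mathbb P(\omega)$ for the ergodic decomposition of $\hat\mu$, where $\Omega$ indexes the atoms of the partition $\xi$ into ergodic components, $\hat\mu_\omega$ is for $\mathbb P$-almost every $\omega$ an ergodic $F$-invariant probability measure, and $\omega(z)$ denotes the almost-everywhere defined component containing $z$. Since $\hat\tau$ is Borel measurable and $\B$-invariant by the construction in Section~\ref{measurable}, Proposition~\ref{tau H invariant} applies to each $\hat\mu_\omega$ and yields an $F$-invariant set $E_\omega$ with $\hat\mu_\omega(E_\omega)=1$ such that $\hat\tau(z')=(D_tH^s_{x,y})_*(\hat\tau(z))$ whenever $z=(x,t),\,z'=(y,t')\in E_\omega$ and $z'\in\tw^s_{loc}(z)$. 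By Fubini for the ergodic decomposition, the set $\hat G_1=\{\,z:\;z\in E_{\omega(z)}\,\}$ then has $\hat\mu(\hat G_1)=\int_\Omega\hat\mu_\omega(E_\omega)\,d\mathbb P(\omega)=1$.

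Next I would make precise, via the Hopf argument, that two points lying on a common local stable set of $F$ belong to the same ergodic component of $\hat\mu$ outside a null set. Fix a countable dense subset $\{\phi_j\}$ of $C(X\times\M)$; by Birkhoff's theorem there is a full-measure $F$-invariant set $R$ on which all forward and backward time averages $\phi_j^{\pm}$ exist and coincide, and on $R$ the component $\omega(z)$ is determined by the sequence $(\phi_j^+(z))_j$, since $\phi_j^+(z)=\int\phi_j\,d\hat\mu_{\omega(z)}$. If $z'\in\tw^s(z)$ then $d_{X\times\M}(F^nz,F^nz')\to0$ by the contraction property of the stable sets of $F$ established above, so $\phi_j^+(z)=\phi_j^+(z')$ for every $j$ whenever $z,z'\in R$, and hence $\omega(z)=\omega(z')$. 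Setting $\hat G=R\cap\hat G_1$ we get $\hat\mu(\hat G)=1$; and given $z=(x,t)\in\hat G$ and $z'=(y,t')\in\hat G\cap\tw^s_{loc}(z)$, the pair lies in $R$ on a common stable set so $\omega:=\omega(z)=\omega(z')$, it lies in $\hat G_1$ so $z,z'\in E_\omega$, and Proposition~\ref{tau H invariant} applied to $\hat\mu_\omega$ gives $\hat\tau(z')=(D_tH^s_{x,y})_*(\hat\tau(z))$, which is the assertion.

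The one step I expect to require genuine care — and which I take to be the main obstacle — is the measurability of $\omega\mapsto E_\omega$, and hence of $\hat G_1$, since the sets $E_\omega$ are produced separately for each ergodic component. This is a routine point in the theory of ergodic decompositions: the defining property of $E_\omega$ in Proposition~\ref{tau H invariant} is a co-null measurable condition on $z$ relative to the conditional measure $\hat\mu_{\omega(z)}$, so a measurable-selection argument using the disintegration $\{\hat\mu_\omega\}$ supplies a jointly measurable version of $z\mapsto E_{\omega(z)}$ with full $\hat\mu$-measure, and no further dynamical input is needed. Alternatively, one can bypass this bookkeeping entirely by invoking, as already noted before the statement, that the partition $\xi$ is saturated modulo $0$ by local stable sets, and then reading off the conclusion directly from Proposition~\ref{tau H invariant}.
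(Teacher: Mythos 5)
Your proof is correct and follows the same route the paper sketches: ergodic decomposition of $\hat\mu$, the Hopf argument (via forward Birkhoff averages of a countable dense family of continuous functions and the contraction of $F$ along $\tw^s$) to show the ergodic partition $\xi$ is saturated mod $0$ by local stable sets, and Proposition~\ref{tau H invariant} applied to each ergodic component. You are somewhat more explicit than the paper about the measurability of $\omega\mapsto E_\omega$ needed for the Fubini step, but that is a routine point (e.g.\ choose Lusin sets $S_k$ with $\hat\mu(S_k)\to1$ and for each component use the first $k$ with $\hat\mu_\omega(S_k)>1/2$), and the approach and key ideas coincide with the paper's.
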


Now we establish $\mu$-essential  invariance of $\hat \tau_x$, as a Riemannian metric on 
the whole fiber $\M_x$, under the stable and unstable holonomies of the cocycle $\A$ over $X$.

\begin{proposition}  \label{suInv}
There exists a set $G\subset X$ with $\mu(G)=1$ such that 
or any $x,y,y' \in G$  with $y \in W^s_{loc}(x)$ and  $y' \in W^s_{loc}(x)$,
the diffeomorphisms
$$
H^s_{x,y}: (\M_x, \hat \tau_x) \to (\M_y ,\hat \tau_y) \;\text{ and }\;
H^u_{x,y'}: (\M_x, \hat \tau_x) \to (\M_{y'} ,\hat \tau_{y'}) \;\text{ are isometries.}
$$
\end{proposition}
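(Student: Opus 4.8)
The plan is to upgrade the $\hat\mu$-almost-everywhere holonomy invariance of $\hat\tau$ provided by Corollary~\ref{z'} to an invariance statement valid on \emph{every} fiber over a full $\mu$-measure set of base points, via Fubini together with a continuity (density) argument. The first thing I would record is the structure of the fiber volumes: for each $x\in X$ the metric $\hat\tau_x$ is a H\"older continuous Riemannian metric on the compact manifold $\M_x$ by the proposition proved in Section~\ref{along fibers}, so the normalized volume $m_x$ has a positive continuous density with respect to the fixed background volume. In particular each $m_x$ has full support, and any $C^1$ diffeomorphism of $\M$ carries $m_x$-null sets to sets that are null for the background volume class, hence $m_y$-null for every $y\in X$.

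Next I would apply Fubini to $\hat\mu=\int m_x\,d\mu(x)$ and the equality $\hat\mu(\hat G)=1$ from Corollary~\ref{z'}, after first replacing $\hat G$ by its intersection with the analogous full $\hat\mu$-measure set coming from the unstable holonomies (so that $\hat\tau$ on $\hat G$ is invariant under both the stable and the unstable holonomies of $\B$). This produces a set $G\subset X$ with $\mu(G)=1$ such that the fiber slice $(\hat G)_x=\{t\in\M:(x,t)\in\hat G\}$ has full $m_x$-measure for every $x\in G$.

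The main step is then as follows. Fix $x,y\in G$ with $y\in W^s_{loc}(x)$ and set $\phi=H^s_{x,y}\in\drm$, $r=1+\a$. I would consider $T=(\hat G)_x\cap\phi^{-1}\big((\hat G)_y\big)$ and observe it is $m_x$-conull: $(\hat G)_x$ is by the choice of $G$, and $\phi^{-1}\big((\hat G)_y\big)$ is because $(\hat G)_y$ is $m_y$-conull and $\phi$ is a $C^1$ diffeomorphism, so $\phi^{-1}$ preserves the relevant null-set class by the first paragraph. For $t\in T$, both $z=(x,t)$ and $z'=(y,\phi(t))$ lie in $\hat G$, and $z'\in\tw^s_{loc}(z)$ by the definition of $\tw^s$; hence Corollary~\ref{z'} shows that $D_t\phi$ is a linear isometry from $(T_t\M,\hat\tau(x,t))$ onto $(T_{\phi(t)}\M,\hat\tau(y,\phi(t)))$. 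Since $T$ is $m_x$-conull it is dense in $\M$, the map $t\mapsto D_t\phi$ is continuous (as $\phi\in\drm$), and $\hat\tau_x$, $\hat\tau_y$ are continuous, so the set of $t$ for which $D_t\phi$ is an isometry is closed and therefore equals $\M$; this is exactly the assertion that $H^s_{x,y}\colon(\M_x,\hat\tau_x)\to(\M_y,\hat\tau_y)$ is an isometry. The identical argument with the unstable holonomies and the unstable analogue of Corollary~\ref{z'} handles $H^u_{x,y'}$ for $x,y'\in G$ with $y'\in W^u_{loc}(x)$, so $G$ has the required property. I expect the one point needing care to be this absolute-continuity step: to invoke Corollary~\ref{z'} for a given pair $(x,y)$ one needs \emph{both} endpoints of the holonomy to fall in the conull set $\hat G$, which forces the intersection with $\phi^{-1}\big((\hat G)_y\big)$, and the conullity of that preimage is what genuinely uses that the holonomies are $C^1$ and that the fiber volumes have positive densities; everything else is a routine passage from almost-everywhere to everywhere.
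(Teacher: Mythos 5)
Your proof is correct and follows essentially the same route as the paper: project $\hat G$ (intersected with its unstable analogue) to get the full-measure set $G$, use that $H^s_{x,y}$ is a $C^1$ diffeomorphism and $m_x$ has positive continuous density to show $T=(\hat G)_x\cap (H^s_{x,y})^{-1}((\hat G)_y)$ is $m_x$-conull hence dense, invoke Corollary~\ref{z'} on $T$, and extend by continuity of $\hat\tau_x$, $\hat\tau_y$, and $DH^s_{x,y}$. The only cosmetic difference is that the paper constructs $G^s$ and $G^u$ separately and takes $G=G^s\cap G^u$, while you intersect $\hat G$ with the unstable analogue before projecting.
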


\begin{proof} 
We will obtain a set $G^s$ of full measure for the stable holonomies. A similar argument gives
a full measure set $G^u$ for the unstable holonomies, and $G=G^s \cap G^u$.
\vskip.1cm

Let $\hat G\subset X\times\M$ be as in Corollary \ref{z'} and let
$G^s=\pi(\hat G)\subset X$, where $\pi$ is the projection from
$X\times \M$ to $X$.  Then we have $\mu(G^s)=1$ and 
$m_x(\M_x \cap \hat  G)=1$ for $\mu$ almost all $x\in G^s$.
Discarding a set of measure zero, we can assume that $\hat  G$ is $F$-invariant
and 
$$
m_x(\M_x \cap \hat  G)=1 \quad\text{for all }\, x\in G^s.
$$ 

Now we show that if $x,y\in G^s$ and $y\in W^s_{loc}(x)$,
then $H^s_{x,y}$ is an isometry between the fibers $(\M_x, \hat \tau_x)$ and $(\M_y,\hat \tau_y).$
Since $H^s_{x,y}: \M_x \to \M_y$ is a diffeomorphism, it maps  sets of zero 
volume to sets of zero volume. The set 
$$
E= (\M_x \cap \hat G)\cap \left( (H^s_{x,y})^{-1} (\M_y \cap \hat G)\right)
$$
satisfies  $E\subset \hat G$, $ H^s_{x,y}(E)\subset \hat G$, and $m_x(E)=1$,
and in particular $E$ is dense in  $\M_x$.
By Corollary \ref{z'} we have 
$$
\hat \tau(z')=(D_tH^s_{x,y})_*(\hat \tau(z)) \quad\text{for all }z\in E.
$$
Thus $D H^s_{x,y}$ is isometric on the dense set $E$ and, as the Riemannian metrics
$\hat \tau_x$ and $\hat \tau_y$ are $\a$-H\"older continuous along the fibers, we conclude 
that the diffeomorphism 
$$
H^s_{x,y}: (\M_x, \hat \tau_x) \to (\M_y ,\hat \tau_y) \;\text{ is an isometry for all }x,y \in G^s 
\;\text{ with }y \in W^s_{loc}(x).
$$ 
\end{proof}

We denote by $\T(\M,\a)$ the space of $\a$-H\"older continuous Riemannian metrics on $\M$ 
equipped with $C^\a$ distance $d_{\T \alpha}$.
Then the $\mu$-essential invariance of $\hat\tau$ yields $\mu$-essential $\beta\rho$-H\"older continuity of
$\hat\tau$ as a function from $X$ to $\T(\M,\a)$ along the stable and unstable leaves in $X$. 

\begin{corollary}  \label{Holder}
The function $x \mapsto \hat\tau_x$ is $\beta\rho$-H\"older continuous on $G$ 
along the stable and unstable leaves in $X$ as a function from $X$ to $\T(\M,\a)$, that is 
\begin{equation}\label{Hold hol tau}
d_{\T \alpha}(\hat \tau_x,\hat \tau_y) \le  C\, d_X(x,y)^{\beta\rho} \quad \text{for all }x,y \in G\, \text{ with }y \in W^{s/u}_{loc}(x).
\end{equation}

\end{corollary}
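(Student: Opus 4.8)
The plan is to read off \eqref{Hold hol tau} from the exact invariance established in Proposition \ref{suInv}, converted into a quantitative estimate by means of the holonomy bound (H3). Fix $x,y\in G$ with $y\in W^s_{loc}(x)$, and write $\phi=H^s_{x,y}$, regarded as a diffeomorphism of $\M$ through the identifications $\M_x\cong\M\cong\M_y$. Proposition \ref{suInv} says precisely that $\phi$ is an isometry from $(\M_x,\hat\tau_x)$ to $(\M_y,\hat\tau_y)$, i.e. $\phi^\ast\hat\tau_y=\hat\tau_x$, so that
$$
\hat\tau_x-\hat\tau_y=\phi^\ast\hat\tau_y-\hat\tau_y ,\qquad \bigl(\phi^\ast\hat\tau_y\bigr)(t)(v,w)=\hat\tau_y(\phi(t))\bigl(D_t\phi\,v,\,D_t\phi\,w\bigr).
$$
By (H3) we have $d_{C^r}(\phi,\Id)\le c\,d_X(x,y)^{\beta\rho}$ with $r=1+\a$ and $\rho=\gamma-\a$, so the whole statement reduces to: the pullback of $\hat\tau_y$ by a diffeomorphism that is $C^{1+\a}$-close to the identity stays within $C\,d_X(x,y)^{\beta\rho}$ of $\hat\tau_y$ in the $C^\a$ distance $d_{\T\a}$. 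The case $y\in W^u_{loc}(x)$ is identical, with $H^u$ in place of $H^s$.

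To carry out this estimate I would first gather two auxiliary facts. (a) The proposition of Section \ref{along fibers} holds with $\a$ replaced by any $\tilde\a<\gamma$, since its proof uses only the boundedness of $\A_X$ in $C^{1+\tilde\a}$, which Proposition \ref{Cr bdd} provides; hence each $\hat\tau_x$ is $\tilde\a$-H\"older on $\M$ with a constant independent of $x$, and I fix such a $\tilde\a$ with $\gamma-\a<\tilde\a<\gamma$. (b) A telescoping of the limit \eqref{holonomy def} — estimating the $C^0$ distance between consecutive terms $(\A^n_y)^{-1}\circ\A^n_x$ by the $C^0$ size of the near-identity diffeomorphism $(\A_{f^ny})^{-1}\circ\A_{f^nx}$, which is $\lesssim d_X(f^nx,f^ny)^\beta$ by $\beta$-H\"older continuity of $\A$, composed on both sides with maps whose Lipschitz constants are bounded uniformly in $n$ by the boundedness of $\A_X$ — yields the sharper bound $d_{C^0}(H^s_{x,y},\Id)\le C_0\,d_X(x,y)^\beta$. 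Then I would split $\phi^\ast\hat\tau_y-\hat\tau_y=[\hat\tau_y\!\circ\!\phi-\hat\tau_y](D_t\phi\,\cdot,D_t\phi\,\cdot)+\hat\tau_y\,[(D_t\phi)^{\otimes}-\Id]$, bound the second summand by Lemma \ref{A*} and $\|D\phi-\Id\|_{C^\a}\le\kappa\,d_{C^r}(\phi,\Id)$, and control the first summand through the fiberwise $\tilde\a$-H\"older modulus of $\hat\tau_y$ together with the $C^0$ bound on $\phi-\Id$ from (b), its $\a$-H\"older seminorm being handled by splitting pairs $t,t'$ according to whether $d_\M(t,t')$ exceeds $d_{C^0}(\phi,\Id)$.

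The step I expect to be the main obstacle is exactly this last estimate, and specifically the exponent bookkeeping in it. The base-point displacement $\phi(t)-t=H^s_{x,y}(t)-t$ is only of order $d_X(x,y)^\beta$ — one cannot do better, since $\A$ is merely $\beta$-H\"older — so this displacement must be ``spent'' against the fiberwise H\"older regularity of $\hat\tau_y$; obtaining the exponent $\beta\rho$ this way is why one needs that regularity available for exponents arbitrarily close to $\gamma$, and why the crude $C^0$-consequence of (H3) (whose exponent $\beta\rho$ is strictly smaller than $\beta$) does not suffice and must be replaced by the direct bound in (b). Once \eqref{Hold hol tau} is in hand, the remaining work in Theorem \ref{main 2} — promoting H\"older continuity along stable and unstable leaves on $G$ to H\"older continuity on $G$ via the local product structure, extending by density and uniform continuity to all of $X$, and reading off the isometry property from the $\B$-invariance of $\hat\tau$ — is routine.
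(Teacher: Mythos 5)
Your proposal correctly identifies a genuine subtlety that the paper's very terse proof does not spell out. The paper's displayed estimate invokes only Lemma~\ref{A*}, which controls the effect of the derivative $D_tH^s_{x,y}$ on a single inner product, but says nothing about the base-point displacement $\hat\tau_y(H^s_{x,y}(t))-\hat\tau_y(t)$ that arises when one writes $\hat\tau_x=(H^s_{x,y})^\ast\hat\tau_y$ and compares at the same point $t$. If one naively bounds that displacement by $d_{C^0}(H^s_{x,y},\Id)\le d_{C^r}(H^s_{x,y},\Id)\le c\,d_X(x,y)^{\b\rho}$ and then convolves with the $\a$-H\"older modulus of $\hat\tau_y$, one gets only the exponent $\a\b\rho<\b\rho$. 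Your two auxiliary facts are exactly the right input and both are correct: (b) the direct $C^0$ bound $d_{C^0}(H^s_{x,y},\Id)\le C_0\,d_X(x,y)^\b$ does follow from the telescoping you describe, using only $\b$-H\"older continuity of $\A$ in $C^0$ together with uniformly bounded Lipschitz constants of $\A^n_y$ and $(\A^n_y)^{-1}$; and (a) the fiberwise H\"older regularity of $\hat\tau$ does hold with any exponent $\tilde\a<\g$, since Proposition~\ref{Cr bdd} makes $\A_X$ bounded in $C^{1+\tilde\a}$ and one may start the construction of $\hat\tau$ from a smooth background metric $\tau$. With these inputs the $C^0$ part of $d_{\T\a}(\hat\tau_x,\hat\tau_y)$ comes out as claimed: the derivative term is $O(d_X(x,y)^{\b\rho})$ by (H3) and Lemma~\ref{A*}, and the translation term is $O(d_X(x,y)^{\b\tilde\a})\le O(d_X(x,y)^{\b\rho})$ once $\tilde\a>\rho=\g-\a$.

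Where I believe there is still a residual gap — in your bookkeeping and, implicitly, in the paper's — is the $\a$-H\"older seminorm of the translation term $u(t):=\hat\tau_y(H^s_{x,y}(t))-\hat\tau_y(t)$. Whatever splitting threshold $\delta$ you use for the pair $t,t'$, what you are doing is interpolating between $\|u\|_{C^0}\lesssim (d_{C^0}(H^s_{x,y},\Id))^{\tilde\a}$ and $[u]_{\tilde\a}=O(1)$, and this can only give $[u]_\a\lesssim (d_{C^0}(H^s_{x,y},\Id))^{\tilde\a-\a}\lesssim d_X(x,y)^{\b(\tilde\a-\a)}$. Since $\tilde\a<\g$, the exponent $\b(\tilde\a-\a)$ can be made arbitrarily close to, but never equal to, $\b(\g-\a)=\b\rho$. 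So your plan, carried out carefully, proves \eqref{Hold hol tau} with any exponent strictly smaller than $\b\rho$ in place of $\b\rho$ — which is what the paper's one-line proof also yields once the translation term is properly accounted for. This weaker exponent is entirely sufficient for the rest of the paper (the local product structure argument and density extension do not need the sharp constant), so it is a harmless imprecision rather than a flaw in the strategy; but your write-up should state the conclusion with the honest exponent rather than claim exactly $\b\rho$.
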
 

\begin{proof}
By  Proposition \ref{holonomies} (H3) the holonomies $H^{s/u}$ are $\beta\rho$-H\"older 
continuous in $\drm$, $r=1+\alpha$. By Proposition \ref{suInv}, $H^{s/u}_{x,y} (\hat \tau_x) 
=\hat \tau_y$  for all $x,y \in G$
 with $y \in W^{s/u}_{loc}(x)$. Now 
using Lemma \ref{A*} and  boundedness of $\hat \tau$,  we obtain 
$$
d_\T(\hat \tau_x,\hat \tau_y) \le c_1 \,d_{C^r}(H^{s/u}_{x,y}, \Id)\le c_1 c\, d_X(x,y)^{\beta\rho}.
$$
\end{proof}

Now the local product structure argument shows that $\hat \tau$ coincides 
$\mu$ almost everywhere with a $\beta\rho$-H\"older continuous stable and unstable 
holonomy invariant  function $\tau :X \to \T(\M,\a)$.
We consider a small open set $U$ in $X$ 
with the product structure of stable and unstable leaves, that is 
$$
U=W^s_{loc}( x_0)\times W^u_{loc}( x_0)\overset{def}{=}\,
\{W^s_{loc}( x)\cap W^u_{loc}( y)\; | \; x\in W^s_{loc}( x_0), \;y\in W^u_{loc}( x_0)\}.
$$
We recall that the measure of maximal entropy $\mu$  is equivalent to the product 
of its conditional measures on $W^s_{loc}( x_0)$
and $W^u_{loc}( x_0)$, which have full support on the corresponding leaves.
Therefore for $\mu$ almost all local stable leaves in $U$, the set of  points of 
$G$ on the leaf has full conditional measure, and hence  full support. 
Without loss of generality, we can assume that $G$ has no points on the other leaves.
Hence for any two  points $x$ and $y$ in $G\cap U$ there exists a point 
$w\in W^s_{loc}(x)\cap G$  such that 
$W^u_{loc}(w)\cap W^s_{loc}(y)$ is also in $G\cap U$.
Then \eqref{Hold hol tau} and the local product structure of the stable 
and unstable manifolds yield that for all $x,y \in G\cap U$ we have
$$
d_\T(\hat \tau(x), \hat \tau (y))\le c_3 \,d_X(x,y)^{\beta\rho}.
$$
Since this estimate holds for all $x,y \in G$, which is dense in $X$,
$\hat \tau$ extends to a $\beta\rho$-H\"older continuous function $\tau :X \to \T(\M,\a)$,
which is also invariant under the holonomies and the cocycle.
As a function on $X\times \M$, $\,\tau$ is $\gamma$-H\"older continuous with
 $\gamma = \min \,\{\a, \beta\rho\}.\,$
 This completes the proof of Theorem \ref{main 2}.


\vskip.7cm

\end{document}